\newcommand\R{\mathbb{R}}
\newcommand{\Par}[1]{\left(#1\right)}  
\newcommand{\pd}[2]{\frac{\partial #1}{\partial #2}}            
\newcommand{\pdn}[3]{{\frac{\partial^{#1}#2}{\partial#3^{#1}}}} 
\newcommand{\dt}{\Delta t}
\newcommand{\dx}{\Delta x}
\newcommand{\dy}{\Delta y}
\newcommand{\BigOh}{\mathcal{O}}
\newcommand{\eps}{\varepsilon}              
\newcommand{\E}{\mathcal{E}}
\newtheorem{rmk}{Remark}
\newtheorem{lem}{Lemma}
\newcommand{\hf}{\frac{1}{2}}
\newcommand{\Dt}{\Delta t}
\newcommand{\Dx}{\Delta x}
\newcommand{\Dy}{\Delta y}
\newcommand{\En}{\mathcal{E}}
\providecommand{\norm}[1]{\lVert#1\rVert}
\journalname{Journal of Scientific Computing}
\begin{document}

\title{
An explicit high-order single-stage single-step positivity-preserving finite
difference WENO method for the compressible Euler equations
}
\titlerunning{Positivity-preserving Picard integral formulated finite difference WENO}

\author{David C. Seal \and
        Qi Tang \and
        Zhengfu Xu \and \\
        Andrew J. Christlieb}
\institute{
    David C. Seal \at
    Department of Mathematics \\
    U.S. Naval Academy \\
    121 Blake Road \\
    Annapolis, MD 21402 USA \\
    Tel.: +1(410) 293-6784 \\
    \email{seal@usna.edu} 
\and
    Qi Tang \at
    Department of Mathematical Sciences \\
    Rensselaer Polytechnic Institute \\
    Troy, NY 12180, USA
\and
    Zhengfu Xu \at
    Department of Mathematical Sciences \\
    Michigan Technological University \\
    Houghton, MI 49931, USA \\
\and
    Andrew J. Christlieb \at
    Department of Mathematics and
    Department of Electrical and Computer Engineering \\
    Michigan State University \\
    East Lansing, MI 48824, USA \\
}

\date{Received: date / Accepted: date}

\maketitle

\begin{abstract}
In this work we construct a high-order, single-stage, single-step positivity-preserving 
method for the compressible Euler equations.  Space is
discretized with the finite difference weighted essentially non-oscillatory
(WENO) method.  Time is discretized through a Lax-Wendroff procedure that is
constructed from the Picard integral formulation (PIF) of the partial differential
equation.  The method can be viewed as a modified flux approach, where
a linear combination of a low- and high-order flux defines the numerical flux
used for a single-step update.  The coefficients of the linear combination are
constructed by solving a simple optimization problem at each time
step.  The high-order flux itself is constructed through the use of
Taylor series and the Cauchy-Kowalewski procedure that incorporates
higher-order terms.  Numerical results in one- and two-dimensions are
presented.

\keywords{ 
        hyperbolic conservation laws \and
        Lax-Wendroff \and
        weighted essentially non-oscillatory \and
        positivity-preserving
}

\end{abstract}




\section{Introduction} \label{sec:introduction}

The objective of this work is to define a single-stage, single-step finite
difference WENO method that is provably positivity-preserving for the
compressible Euler equations.
These equations describe the evolution of
density $\rho$, momentum $\rho \vec{u}$ and energy $\E$ of an ideal gas through
\begin{equation}
\label{eqn:euler-eqns-1d}
\left( 
\begin{array}{c}
    \rho \\ \rho \vec{u} \\ \E
    \end{array}
    \right)_{, t}
    + 
    \nabla_{\bf x} \cdot
    \left( 
    \begin{array}{c}
       \rho \vec{u} \\ \rho \vec{u} \otimes \vec{u} + p {\bf I}\\ ( \E + p ) \vec{u}
    \end{array}
    \right)
    = 0.
\end{equation}
The energy $\E$ is related to the primitive 
variables $\rho$, $\vec{u}$ and pressure $p$ 
by the equation of state that we take to be $\E = \frac{p}{ \gamma-1 } + \frac{1}{2}\rho \norm{\vec{u}}^2$.  
The ratio of specific heats is the constant $\gamma$.
Numerical difficulties for solving this system include the
following:
\begin{itemize}
\item Low (1st and 2nd) order methods generally 
suffer from an inordinate amount of numerical diffusion.
However, they are oftentimes more robust, and in some
cases they have provable convergence to the correct entropy solution. 
Historically, 2nd-order schemes \cite{LxW60,Sod78,Harten78,Roe81} 
have been called ``high-resolution'' methods when compared to their 1st-order
counterparts \cite{VonRict50,CoIsRe52,Lax54,God57}.
\item High-order methods \cite{HaEnOsCh87,ShuOsher87,LiuOsherChan94} provide greater accuracy and resolution for much
less overall computational effort.  However, they 
are oftentimes less robust, and do not necessarily have
provable convergence to the correct entropy solution.
\end{itemize}
In this work, we define a high-order conservative finite difference method 
based upon the Picard integral formulation of the PDE.  We make a further
modification to the fluxes and define a numerical scheme that obtains the following properties:
\begin{itemize}
\item High-order accuracy in space ($5^{th}$-order) and time ($3^{rd}$-order).
Our method can be extended to arbitrary order in space or time.
\item A robust scheme that stems from provable positivity preservation 
of the pressure and density.  Numerical results indicate that high-order
accuracy is retained with our positivity-preserving limiter turned on.
\end{itemize}
Our scheme is the first single-stage, single-step numerical method that
simultaneously attains high-order accuracy, with
\emph{provable} positivity preservation.  
When compared to other positivity-preserving schemes,
our method has the following advantages:
\begin{itemize}
    \item In order to retain positivity, we only solve one simple optimization
    problem per time step.
    Unlike positivity-``preserving'' methods that use Runge-Kutta discretizations \cite{XiQiXu14,ChLiTaXu14}, 
    positivity of our solution is guaranteed during the entire simulation
    because we do not have internal stages where the solution can go negative.
    \item Compared to other positivity-preserving schemes
    \cite{ZhangShu11-survey,ZhangShu12}, 
    the addition of the positivity
    preserving limiter introduces none of the additional time step restrictions that are
    often introduced in order to retain positivity.
\end{itemize}
In addition, our method is amenable to adaptive mesh refinement (AMR) technology.
At present, we aim to lay the necessary foundation that would be required to do so.
An in depth investigation of this property is the subject of a future study.

\subsection{An overview of the proposed method}

The Euler equations define a system of hyperbolic conservation laws.  In 1D, such an
equation is given by
\begin{equation}
\label{eqn:1dcons}
  q_{,t} + f(q)_{\!,\,x} = 0,
\end{equation}
where $q(t,x):\mathbb{R}^+\times \mathbb{R} \to \mathbb{R}^m$ is the unknown 
vector of $m$ conserved quantities and $f:\mathbb{R}^m \to \mathbb{R}^m$ is 
a prescribed flux function.  
The conserved variables for the 1D Euler equations are 
$q = \left( \rho, \rho u^1, \E \right)^T$.
A typical finite difference solver for \eqref{eqn:1dcons} 
discretizes space with a uniform grid of $m_x$ equidistant points in $\Omega = [a,b]$,
\begin{equation}
  x_i = a + \Par{i-\frac{1}{2}}\Delta x,
  \qquad \Delta x = \frac{b-a}{m_x},
  \qquad i\in\{1,\dots,m_x\},
\end{equation}
and looks for a pointwise approximation $q_i^n \approx q(t^{n},x_i)$ solution to
hold at discrete time levels $t^n$.  In a conservative finite difference WENO
method, the update of the unknowns is typically defined by
\begin{equation}
\label{eqn:1dpif}
    q^{n+1}_i = q^n_i - \frac{\Dt}{\Dx}\left( {F}^n_{i+\hf}-{F}^n_{i-\hf} \right),
\end{equation}
where the numerical flux ${F}^n_{i\pm\hf}$ is constructed from a
linear combination of the WENO reconstruction procedure applied to stage
values from a Runge-Kutta solver.

In this work, we propose the following procedure:
\begin{enumerate}
\item Construct a high-order approximation to the \emph{time-averaged fluxes} 
\cite{HaEnOsCh87,SeGuCh14}
\begin{equation}
\label{eqn:picard1d-b}
    {F}^n_i := \frac{1}{\dt} \int_{t^n}^{t^{n+1}} f( q(t, x_i ) )\, dt
\end{equation}
at each grid point $x_i$.  In this work, we consider the Taylor
discretization of Eqn. \eqref{eqn:picard1d-b} for conservative finite
difference methods.
\item 
Construct a 
{\bf high-order in space and time} numerical flux $\hat{F}^n_{i-\hf}$ based
upon applying the WENO reconstruction procedure to the time-averaged fluxes
\cite{SeGuCh14}.
%
\item Replace the flux constructed in Step 2 with 
\begin{align}
\label{eqn:limited-flux}
    \tilde{F}^n_{i-\hf} := \theta^n_{i-\hf}(\hat{F}^n_{i-\hf} - \hat{f}^n_{i-\hf}) + \hat{f}^n_{i-\hf},
\end{align}
where $\theta^n_{i-\hf} \in [0,1]$ is found by solving a single optimization problem, and
$\hat{f}^n_{i-\hf}$ is a low-order flux that guarantees positivity of the
solution.  This procedure is described in \S\ref{sec:1d}.
\item Insert the result of Step 3 into Eqn.  \eqref{eqn:1dpif}, and update the
solution.
\end{enumerate}

Steps 1 and 2 have already been proposed in \cite{SeGuCh14}, where high-order
accuracy is obtained through a flux modification that incorporates the
high-order temporal discretization.  A review of this procedure is 
presented in \S\ref{sec:review}.
Step 3 can be thought of as a further flux modification, where an 
automatic switch adjusts between the high-order non positivity-preserving 
scheme, and a low-order, positivity-preserving scheme.
The original idea is attributed to Harten and Zwas \cite{HaZw72}, but
has since been extended to high-order WENO schemes \cite{XiQiXu14}.
The details of this procedure are presented in \S\ref{sec:1d} and \S\ref{sec:md}.

\section{Background}

The compressible Euler equations have been an object of study ever since the
infancy of numerical methods \cite{VonRict50,CoIsRe52,Lax54,God57,LxW60}.
In recent years, high-order methods have attracted considerable interest
because of their ability to obtain higher accuracy on certain problems with an
equivalent computational cost of a low-order method.
Among many choices of high-order 
schemes are the classical essentially non-oscillatory 
(ENO) method \cite{HaEnOsCh87}, the extensions to 
finite difference (FD) and finite volume (FV) WENO methods \cite{LiuOsherChan94,JiangShu96,Shu09},
and the discontinuous Galerkin (DG) method \cite{colish89}.
These methods all seek to
simultaneously obtain two properties: retain high-order accuracy in smooth
regions, and capture shocks without introducing spurious oscillations near
discontinuities of the solution.
%
One added difficulty with high-order schemes is the necessity of defining and
selecting a high-order time integrator.
Runge-Kutta methods applied to the method of lines (MOL) approach is the most 
widely used discretization for high-order schemes.  These methods all treat
space and time as separate entities.

Over the past decade, there has been a rejuvenation of interest in high-order
single-stage, single-step methods for hyperbolic conservation laws, including the compressible Euler equations. 
All of these methods are typically based upon a Taylor 
temporal discretization that uses the Cauchy-Kowalewski procedure to exchange
temporal for spatial derivatives.  Lax and Wendroff performed this very
procedure in 1960 \cite{LxW60}, and this technique has since been called the Lax-Wendroff
procedure within the numerical analysis community.
Methods for defining a second and higher-order single-step version of Godunov's method
were investigated in the 1980s \cite{Ro87,BeCoTr89,Me90}.
The original high-order ENO method of Harten et. al \cite{HaEnOsCh87} used Taylor series
for their temporal discretization, although most of the attention they have received
is for their emphasis on the spatial discretization.
In 2001, the preliminary definitions for the so-called 
Arbitrary DERivative (ADER) methods \cite{toro2001,ToroTitarev02,TitarevToro02-ader} were put in place.  
Additionally, various FD WENO methods with
Lax-Wendroff time discretizations have been constructed and tested
on the Euler equations \cite{QiuShu03,JiangShuZhang13,SeGuCh14}.  
Recent ADER methods have been defined by Balsara and collaborators for hydrodynamics and
magnetohydrodynamics \cite{balsara09,BaDiMeDuHuXu13}, and have later been extended to an
adaptive mesh refinement (AMR) setting \cite{balsara13}.
Other recent work in single-stage, single-step methods for Euler
equations includes Lax-Wendroff time stepping coupled with DG 
\cite{QiuDumbserShu05,DuMu06,GaDuHiMuCl11}, and high-order 
Lagrangian schemes \cite{LiuChengShu09}.  The present work is an extension of
the Taylor discretization of the Picard integral formulation that uses
finite differences for its spatial discretization \cite{SeGuCh14}, which falls
into this single-stage, single-step class of methods.


Defining high-order numerical schemes that retain positivity of the solution
for hydrodynamics (or magnetohydrodynamics) simulations is
genuinely a non-trivial task.  This has been an ongoing subject of study even
for low and the so-called ``high-resolution'' schemes 
\cite{EinfMuRoeSj91,EstVil95,EsVi96,PeShu96,TangXu99,Dubroca99,TangXu00,Gallice03}.
All methods that are second or higher order share the 
same disadvantage that without care, they may violate a natural
weak stability condition that the density and pressure need to keep positive,
which is necessary to ensure physical meaningfulness of the solution and hyperbolicity of the mathematical problem.
Of some of the early positivity works, Perthame and Shu propose a general reconstruction approach to obtain a
high-order positivity-preserving finite volume schemes from a low-order scheme
\cite{PeShu96}.  In addition, they prove that the explicit Lax-Friedrichs
scheme is positivity-preserving with a CFL number up to $0.5$.  Later on, a
more general result extends the 
positivity-preserving property to CFL numbers up to $1$ for 
both explicit and implicit Lax-Friedrichs methods
\cite{TangXu00}.  With those building blocks, a
positivity-preserving limiter is proposed for DG schemes
\cite{ZhangShu10,ZhangShu11-euler,ZhangXiaShu12,KoEk13} and FD and FV WENO schemes
\cite{ZhangShu11-survey,ZhangShu12} for the Euler equations.  In
\cite{hu2013}, a flux cut-off limiter is also applied to FD WENO schemes to
retain positivity.  
In addition to gas dynamics, plasma physics is another
area where retaining positivity of numerical solutions is critical, and
therefore has seen recent attention in the literature \cite{RoSe11,GuChHi14}.
For example, collision operators for Vlasov equations require a positive
distribution function in order to avoid creating artificial singularities.
%

Our method is based upon a parameterized flux limiter that can be dated back
to at least 1972 with the work of Harten and Zwas \cite{HaZw72}.  There, the
authors propose a second-order shock-capturing
\emph{self adjusting hybrid scheme} through a simple linear combination of
low- and high-order fluxes that is identical to Eqn. \eqref{eqn:limited-flux}.  The
original idea is to combine a ``high-order" flux with a
first-order flux such that it has better accuracy in smooth region and
produces a smooth profile around shock regions.  A similar approach called
\emph{flux-corrected transport} is proposed by Boris and Book
\cite{Boris197338,Book1975248,Boris1976397,BoBoZa81}, where the purpose of limiting the
high-order flux is to control overshoots and undershoots around shock
regions.  Sod performs an extensive review of these and other classical
finite difference methods in a classical paper \cite{Sod78}.
Xu and Lian recently extend this work to WENO methods that maintain the
maximum-principle-preserving property for scalar hyperbolic conservation laws
with the so-called ``parametrized flux limiter'' \cite{xu2013,liang2014parametrized}.
Later on, these limiters are applied to FD WENO schemes on
rectangular meshes \cite{XiQiXu13} and FV WENO schemes on triangular meshes
\cite{ChLiTaXu14} for the Euler equations.  These limiters are also
applied to magnetohydrodynamics with a constrained transport framework \cite{tang14}.  
The basic idea for all of these methods is the same: modify the
high-order non positivity-preserving numerical flux by a linear combination of a
low- and high-order flux in order to retain positivity of the solution.  The
modification is carefully designed so that the high-order accuracy
remains.

The purpose of this work is to define a single-stage, single-step finite
difference WENO method that is provably positivity-preserving for the
compressible Euler equations.  
Of the various finite difference schemes constructed from the
Picard integral formulation \cite{SeGuCh14}, 
we begin with the Taylor discretization,
and then apply recently developed flux limiters
\cite{XiQiXu13,ChLiTaXu14} in order to retain positivity of the solution.
One advantage of the chosen limiter is that positivity is preserved without
introducing additional time step restrictions,  however, our primary
contribution is that the present method is the first
scheme to simultaneously be high-order, single-stage, single-step and
have provable positivity preservation.

The outline of this paper is as follows.
In \S\ref{sec:review}, we briefly review the high-order finite difference WENO method 
that is based upon the Picard integral formulation of the PDE with a Taylor
temporal discretization \cite{SeGuCh14}.
In \S\ref{sec:1d} and \S\ref{sec:md}, we present the positivity-preserving 
limiter for PIF-WENO schemes applied to the compressible Euler system in 
single and multiple dimensions.
Numerical examples of the positivity-preserving PIF-WENO scheme applied to the
problems with low density and low pressure is provided in
\S\ref{sec:numerical-results}. 
Finally, conclusions and future work are given in \S\ref{sec:conclusions}.

\section{A single-stage single-step finite difference WENO method} 
\label{sec:review}

The numerical method that is the subject of this work is based upon the Taylor
discretization of the Picard integral formulation of Euler's
equations, which is one of the many methods developed in \cite{SeGuCh14}.
Our focus is on the finite
difference WENO method based on a Taylor discretization of the time-averaged
fluxes because it easily lends itself to the
positivity-preserving limiters that are presented in \S\ref{sec:1d}.
In this section, we review the minimal 
details presented in \cite{SeGuCh14} that are necessary to reproduce the
present work.  In addition, this section serves to set the notation that is
used in upcoming sections.

In two dimensions, a hyperbolic conservation law is defined by a flux function with two components,
\begin{align}
\label{eqn:conslaw2}
    q_{t} + f(q)_{,x} + g(q)_{,y} = 0,
\end{align}
where $q(t,x,y):\R^+\times \R^2 \to \R^m$ is the vector of conserved variables,
and $f,g :\R^m \to \R^m$ are the two components of the flux function.
The Euler equations are an example of a set of equations from this class of problems.

Formal integration  of \eqref{eqn:conslaw2} in time over $t \in [t^n, t^{n+1}]$
defines the 2D \emph{Picard integral formulation} \cite{SeGuCh14} as
\begin{subequations}
\begin{equation}\label{eqn:picard2d-a}
    q^{n+1} = q^n - \dt \left( F^n( x, y ) \right)_{,x} - \dt \left( G^n( x, y) \right)_{,y},
\end{equation}
where the \emph{time-averaged flux} is defined as
\begin{equation}\label{eqn:picard2d-b}
    {F}^n( x, y) := \frac{1}{\dt} \int_{t^n}^{t^{n+1}} f(q(t,x,y))\, dt, \quad 
    {G}^n( x, y) := \frac{1}{\dt} \int_{t^n}^{t^{n+1}} g(q(t,x,y))\, dt.
\end{equation}
\end{subequations}
The basic idea of the Picard integral formulation of WENO (PIF-WENO) \cite{SeGuCh14}
is to first approximate the time-averaged fluxes \eqref{eqn:picard1d-b} at each grid point using
some temporal discretization,
and then approximate spatial derivatives in \eqref{eqn:picard2d-a}
by applying WENO reconstruction to the resulting time-averaged fluxes.
In this work, we approximate Eqn. \eqref{eqn:picard2d-a} with the finite difference WENO method, 
and we use a third-order Taylor discretization for \eqref{eqn:picard2d-b}.
We remark that the positivity-preserving limiter proposed in \S \ref{sec:1d} can be generally applied to any form of
the Picard integral formulation, including Runge-Kutta time discretizations.

Given a domain $\Omega = [a_x,b_x] \times [a_y, b_y]$, a finite difference approximation
seeks pointwise approximations $q^n_{i,j} \approx q\left( t^n, x_i, y_j \right)$
to hold at each
\begin{subequations}
\begin{align}
  x_i &= a_x + \Par{i-\frac{1}{2}}\Delta x,
  \qquad &\Delta x = \frac{b_x-a_x}{m_x},
  \qquad &i\in\{1,\dots,m_x\}, && \quad \\
  y_j &= a_y + \Par{j-\frac{1}{2}}\Delta y,
  \qquad &\Delta y = \frac{b_y-a_y}{m_y},
  \qquad &j\in\{1,\dots,m_y\},&
\end{align}
\end{subequations}
for discrete values of $t = t^n$.  The 2D PIF-WENO scheme \cite{SeGuCh14} solves 
Eqn. \eqref{eqn:conslaw2} with a conservative form
\begin{align}
\label{eqn:pif-2d}
    q^{n+1}_{i,j} = q^n_{i,j} - 
        \lambda_x\left( \hat{F}^n_{i+\hf,j}-\hat{F}^n_{i-\hf,j} \right) 
      - \lambda_y\left( \hat{G}^n_{i,j+\hf}-\hat{G}^n_{i,j-\hf} \right),
\end{align}
where $\lambda_x = \Dt/\Dx$, $\lambda_y = \Dt/\Dy$, and
$\hat{F}^n_{i\pm \hf, j}$ and $\hat{G}^n_{i, j\pm \hf}$ are high-order fluxes obtained
by applying the classical WENO reconstruction to the time-averaged fluxes
in place of a typical ``frozen-in-time'' approximation to the fluxes.
This requires a total of two steps: construct a time-averaged flux, followed by
performing a WENO reconstruction to the resulting modified fluxes.

We first define numerical time-averaged fluxes at each grid point $(x_i, y_j)$ through Taylor expansions.
After taking temporal derivatives of $f$ and $g$, we integrate the resulting
Taylor polynomials over $[t^n,t^{n+1}]$ to yield
\begin{subequations}
\begin{align}
\label{eqn:2D_system.TI-F}
  F_T^n(x,y) := f( q(t^n,x,y) ) 
    + \frac{\dt}{2!}   \frac{df}{dt} ( q(t^n,x,y) ) 
    + \frac{\dt^2}{3!} \frac{d^2f}{dt^2}( q(t^n,x,y) )\\
\label{eqn:2D_system.TI-G}
  G_T^n(x,y) := g( q(t^n,x,y) ) 
    + \frac{\dt}{2!}   \frac{dg}{dt} ( q(t^n,x,y) ) 
    + \frac{\dt^2}{3!} \frac{d^2g}{dt^2}( q(t^n,x,y) ).
\end{align}
\end{subequations}
The temporal derivatives that appear can be found via the Cauchy-Kowalewski procedure.
For example, the first two time derivatives of the first component of the flux function are
given by
\begin{subequations}
\begin{equation}
\label{eqn:time-derivs-2df-a}
   \frac{df}{dt}     = -\pd{f}{q} \cdot \left( f_{\!,\,x} + g_{\!,\,y} \right),
\end{equation}
and
\begin{equation}
\label{eqn:time-derivs-2df-b}
   \frac{d^2f}{dt^2} = \pdn{2}{f}{q} \cdot \bigl( f_{,x} + g_{,y}\,, f_{,x} + g_{,y} \bigr) 
                      + \pd{f}{q} \cdot \left( -f_{,x} - g_{,y} \right)_{,t}.
\end{equation}
\end{subequations}
The last time derivative can be further simplified to
\begin{align}
\label{eqn:temporal-derivs-2dfpg}
-\left( f_{,x} + g_{,y} \right)_{,t} =    
&  \pdn{2}{f}{q} \cdot \left( q_{,x}, f_{,x} + g_{,y} \right) + \pd{f}{q} \cdot \left( f_{,xx} + g_{,xy} \right)
+ \\ \nonumber 
&  \pdn{2}{g}{q} \cdot \left( q_{,y}, f_{,x} + g_{,y} \right) + \pd{g}{q} \cdot \left( f_{,xy} + g_{,yy} \right).
\end{align}
Temporal derivatives of $g$ have a similar structure, and can be found in \cite{SeGuCh14}.

We approximate each $\partial_{x}, \partial_{xx}$ and $\partial_{y}, \partial_{yy}$ 
in \eqref{eqn:time-derivs-2df-a} and \eqref{eqn:time-derivs-2df-b}
by applying the 5-point finite difference formulae
\begin{subequations}
\label{eqn:u-deriv}
\begin{align}
\label{eqn:ux}
u_{i,\, x} &:= \frac{1}{12 \dx} \left( 
        u_{i-2} - 8  u_{i-1} 
      + 8  u_{i+1} - u_{i+2} 
  \right) =
  u_{\, ,\, x}( x_i ) + \BigOh\left( \dx^4 \right)
  \\
\label{eqn:uxx}
u_{i,\, xx} &:= \frac{1}{12 \dx^2} \left( 
      - u_{i-2} 
      + 16 u_{i-1} 
      - 30 u_{i}
      + 16 u_{i+1} - u_{i+2} 
  \right)
  = u_{\, ,\, xx}( x_i ) + \BigOh\left( \dx^4 \right)
\end{align}
\end{subequations}
in each direction.
In order to retain a compact stencil, we compute the cross 
derivatives $\partial_{xy}$ with a second-order approximation
\begin{equation}
u_{ij,\, xy} := \frac{1}{4 \dx \dy } 
    \left(  
        u_{i+1,\, j+1} - u_{i-1,\, j+1} - u_{i+1,\, j-1} + u_{i-1,\, j-1}
    \right), 
\end{equation}
which is sufficient to retain third-order accuracy in time.
After defining these higher derivatives, we define numerical fluxes by
$F^n_{i,j} := F^n_T( x_i, y_j )$ and 
$G^n_{i,j} := G^n_T( x_i, y_j )$.
We then apply WENO reconstruction in a dimension by
dimension fashion to each component of the flux
to construct interface values $F^n_{i\pm \hf, j}$ and $G^n_{i, j\pm \hf}$.
The complete description of this process can be found in \cite{SeGuCh14}.

\begin{rmk} \label{rmk:pif}
The Picard integral formulation sets up a discretization for the fluxes, and not the conserved variables.
\end{rmk}
The significance of this remark is that further flux modifications can be 
incorporated into the Picard integral formulation.
Previous finite difference WENO methods with Lax-Wendroff type time discretizations
(e.g. \cite{QiuShu03,JiangShuZhang13}) rely on Taylor expansions of the \emph{conserved variables}, and
not the fluxes; the Taylor discretization of the Picard integral formulation computes Taylor
expansions of the fluxes, and not the conserved variables.  In \cite{QiuShu03}, conservation of mass
comes from the fact that higher derivatives of the conserved variables are computed with a central stencil.
In our scheme, we directly discretize the fluxes, and are automatically mass conservative because we insert the result
into the WENO reconstruction procedure.
Because ours is an operation on the fluxes, we have the opportunity to consider further flux modifications.
In this work, we further modify the fluxes to obtain 
a provably positivity-preserving method for Euler equations, which we now describe.

\section{The positivity-preserving method: the 1D case} \label{sec:1d}

We begin with the 1D formulation of the proposed positivity-preserving scheme.
Recall that the update for the vector
of conserved variables is given by Eqn. \eqref{eqn:1dpif} for the 1D
conservation law defined in \eqref{eqn:1dcons}.
We consider a numerical flux $\hat{F}^n_{i-\hf}$ 
that is high order accurate in
time (and space) that is constructed from the 1D Taylor discretization of the 
Picard integral formulation (PIF) of FD-WENO \cite{SeGuCh14},
and we consider a low-order flux 
$\hat{f}^n_{i-\hf}$  that is constructed from the Lax-Friedrichs scheme (that
is provably positivity-preserving \cite{PeShu96}).
Both fluxes are constructed by looking at the solution $q^n$ at time level
$t^n$.

We propose modifying the high-order flux by
\begin{equation}
\label{eqn:flux}
    \tilde{F}^n_{i-\hf} := \theta^n_{i-\hf}(\hat{F}^n_{i-\hf} - \hat{f}^n_{i-\hf}) + \hat{f}^n_{i-\hf},
\end{equation}
where a simple optimization problem is solved for the \emph{limiting parameter} $\theta^n_{i-\hf} \in [0,1]$ at each time step.

We observe that if $\theta^n_{i-\hf} = 0$, the 
scheme reduces to the first-order Lax-Friedrichs scheme, which is positivity
preserving, and therefore it is always possible to find a value that retains
positive density and pressure.
If $\theta^n_{i-\hf} = 1.0$, the scheme reduces
to the high-order scheme, but does not guarantee positivity of the numerical solution.  
In order to retain high-order accuracy, we would like
to choose $\theta^n_{i-\hf}$ as close to $1.0$ as possible without violating 
positivity of the density and pressure.

The positivity-preserving Algorithm we outline in this section follows a two step procedure: 
i) guarantee positivity of the density, and then ii) guarantee positivity of the pressure.
The details of this procedure are spelled out in the following subsections.

\subsection{Step 1: Maintain positivity of the density}

This discussion focuses on the first component of the modified flux
\begin{equation}
	\tilde{f}^{n,\rho}_{i+\hf} := \theta^n_{i+\hf}( \hat{F}^{n,\rho}_{i+\hf} - \hat{f}^{n,\rho}_{i+\hf}) + \hat{f}^{n,\rho}_{i+\hf},
\end{equation}
where $\hat{f}^{n,\rho}$ is the first component of the low-order flux $\hat{f}^n$, 
and $\hat{F}^{n,\rho}$ is the first component of the high-order flux $\hat{F}^n$.
In this step, we must assume that the density ${\rho}^{n}_i > 0$ is positive
at the current time.
We further define the low-order update for the density as 
\begin{equation*}
    \hat{\rho}^{n+1}_i := {\rho}_i^n - \lambda \left( \hat{f}^{n,\rho}_{i+\hf} - \hat{f}^{n,\rho}_{i-\hf} \right),
\end{equation*}
and define a numerical lower bound of the high-order updated density $\rho^{n+1}$ as 
$\epsilon^{n+1}_\rho : = \min ( \min_i \left(\hat{\rho}^{n+1}_i \right
),\epsilon_0)$.
The use of $\epsilon_0 > 0$ guarantees finite wave speeds, because the sound speed
$c := \sqrt{\gamma p / \rho}$ goes to infinity as $\rho \to 0$.
In our simulations, we take $\epsilon_0 = 10^{-13}$, which is consistent with recent high-order
positivity-preserving work \cite{ZhangShu10}.
%
%
Thanks to the positivity of the low-order flux \cite{PeShu96}, we observe that $\epsilon^{n+1}_\rho > 0$.

After the low- and high-order fluxes have been computed, the update for the density
at a single grid point $x_i$ only depends on two values of $\theta^n_{i\pm \hf}$ through
\begin{equation*}
    {\rho}_i^{n+1} \left(\theta^n_{i-\hf},\, \theta^n_{i+\hf} \right) = {\rho}_i^n - \lambda \left( \tilde{f}^{n,\rho}_{i+\hf} - \tilde{f}^{n,\rho}_{i-\hf} \right), \quad \lambda = \frac{\Dt}{\Dx}, \quad i \in \left\{ 1, 2, \dots, m_x \right\}.
\end{equation*}
This, and each of the conserved variables are \emph{linear functions} with respect to the 
variable $\left( \theta^n_{i-\hf}, \theta^n_{i+\hf} \right) \in [0,1]^2$.  
To preserve the positivity of the density $\rho^{n+1}$, 
we want to guarantee $\rho^{n+1}_i \geq \epsilon^{n+1}_\rho$.
Therefore, we seek bounds $\Lambda^\rho_{\pm\hf, I_i}$ 
such that whenever
\begin{equation*}
	\left(\theta^n_{i-\hf}, \theta^n_{i+\hf} \right) \in \left[0,\, \Lambda^\rho_{-\hf, I_i} \right] \times 
		\left[0,\, \Lambda^\rho_{+\hf, I_i} \right] \subseteq [0, 1]^2,
\end{equation*}
we have
\begin{align}
\label{liu3}
    {\rho}_i^{n+1} \left(\theta^n_{i-\hf},\theta^n_{i+\hf} \right) = {\rho}_i^n - \lambda \left( \tilde{f}^{n,\rho}_{i+\hf} - \tilde{f}^{n,\rho}_{i-\hf} \right) \geq \epsilon^{n+1}_\rho.
\end{align}
The purpose of defining such a set is that in Step 2 of \S\ref{subsec:step2-1d},
we will further limit the fluxes to maintain positivity of the pressure.

We insert the definition of $\hat{\rho}^{n+1}_i$ into Eqn. \eqref{liu3} to see
\begin{equation}
\label{inequ1}
    \hat{\rho}^{n+1}_i -  \lambda\left[ 
        \theta^n_{i+\hf} \left( \hat{F}^{n,\rho}_{i+\hf} - \hat{f}^{n,\rho}_{i+\hf} \right) 
      - \theta^n_{i-\hf} \left( \hat{F}^{n,\rho}_{i-\hf} - \hat{f}^{n,\rho}_{i-\hf} \right)  \right] \geq \epsilon^{n+1}_\rho,
\end{equation}
which is equivalent to
\begin{align}
\label{inequ2}
 	 \theta^n_{i-\hf} \Delta{f}_{i-\hf}
-    \theta^n_{i+\hf} \Delta{f}_{i+\hf}   \geq \epsilon^{n+1}_\rho - \hat{\rho}^{n+1}_i,
\end{align}
where $\Delta f_{i-\hf} :=  \lambda ( \hat{F}^{n,\rho}_{i-\hf} -  \hat{f}^{n,\rho}_{i-\hf})$ is a measure of the 
deviation of the high- and low-order fluxes.
Note that the right hand side satisfies 
$\epsilon^{n+1}_\rho - \hat{\rho}^{n+1}_i \le 0$, 
and therefore there is at least one solution that can be found for Eqn. \eqref{inequ2} (namely $\theta^n_{i-\hf} = \theta^n_{i+\hf} = 0$).

We determine
bounds on $\Lambda^\rho_{\pm\hf, I_i}$ through a case-by-case discussion 
based on the signs of $\Delta f_{i-\hf}$ and $\Delta f_{i+\hf}$.  This
analysis has already been
performed for single \cite{xu2013} and multidimensional \cite{liang2014parametrized} scalar problems.
There are a total of four cases of Eqn. \eqref{inequ2} to consider:
\begin{itemize}
  \item \underline{Case 1.} If $\Delta f_{i-\hf} \ge 0$ and $\Delta f_{i+\hf} \le 0$, then we set
\begin{equation*}
	\left( \Lambda^\rho_{-\hf, I_i}, \Lambda^\rho_{+\hf, I_i} \right) := (1,1).
\end{equation*}
  \item \underline{Case 2.} If $\Delta f_{i-\hf} \ge 0$ and $\Delta f_{i+\hf} > 0$, then we define
\begin{equation*}
	\left( \Lambda^\rho_{-\hf, I_i}, \Lambda^\rho_{+\hf, I_i} \right) := \left(1,\min \left(1, \frac{\epsilon^{n+1}_\rho - \hat{\rho}^{n+1}_i}{- \Delta f_{i+\hf}}\right)\right).
\end{equation*}
  \item \underline{Case 3.} If $\Delta f_{i-\hf} < 0$ and $\Delta f_{i+\hf} \le 0$, then we set
\begin{equation*}
	\left( \Lambda^\rho_{-\hf, I_i}, \Lambda^\rho_{+\hf, I_i} \right) := \left(\min \left(1, \frac{\epsilon^{n+1}_\rho - \hat{\rho}^{n+1}_i}{ \Delta f_{i-\hf}}\right),1 \right).
\end{equation*}

  \item \underline{Case 4.} If $\Delta f_{i-\hf} < 0$ and $\Delta f_{i+\hf} >0$, there are two sub-cases to consider.
  \begin{itemize}
    \item  \underline{Case 4a.} If the inequality $\eqref{inequ2}$ is satisfied with $(\theta^n_{i-\hf}, \theta^n_{i+\hf}) = (1, 1)$ then
    we set
    \begin{equation*}
    \left(\Lambda^\rho_{-\hf, I_i}, \Lambda^\rho_{+\hf, I_i} \right) := \left(1, 1 \right).
	\end{equation*}
    \item  \underline{Case 4b.} Otherwise, we choose
    \begin{equation*}
    \left(\Lambda^\rho_{-\hf, I_i}, \Lambda^\rho_{+\hf, I_i} \right) := \left(\frac{\epsilon^{n+1}_\rho - \hat{\rho}^{n+1}_i}{\Delta f_{i-\hf} - \Delta f_{i+\hf}},\frac{ \epsilon^{n+1}_\rho - \hat{\rho}^{n+1}_i}{ \Delta f_{i-\hf} -  \Delta f_{i+\hf}} \right).
    \end{equation*}
  \end{itemize}
  \end{itemize}
  
After considering each of the above cases at each grid element $x_i$, we define the following set
\begin{equation}
    S^\rho_i := \left[ 0, \Lambda^\rho_{-\hf, I_i} \right] \times \left[ 0, \Lambda^\rho_{+\hf, I_i} \right].
\end{equation}
{The obtained set has the property that $\rho^{n+1}_i(\theta^n_{i-\hf},\theta^n_{i+\hf}) \geq \epsilon_\rho^{n+1}$
for any $ (\theta^n_{i-\hf}, \theta^n_{i+\hf}) \in S^\rho_i$.}

\subsection{Step 2: Maintain positivity of the pressure}
\label{subsec:step2-1d}

The second step focuses on the pressure $p^{n+1}_i$.
We begin with the following Lemma, which has already been used in the past \cite{ZhangShu10,XiQiXu14}.
\begin{lem} \label{lem:concave-press}
The pressure function satisfies 
\begin{equation*}
    p\left( q^n_i \left( \alpha \overrightarrow{\theta}^1 + (1-\alpha) \overrightarrow{\theta}^2 \right) \right)
    \geq \alpha   p\left( q^n_i \left( \overrightarrow{\theta}^1 \right) \right)
    + (1- \alpha) p\left( q^n_i \left( \overrightarrow{\theta}^2 \right) \right)
\end{equation*}
for any $\alpha \in [0,1]$ and $\overrightarrow{\theta}^1, \overrightarrow{\theta}^2 \in S^\rho_i$.
\end{lem}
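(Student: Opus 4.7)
The plan is to exploit two ingredients: (i) the updated conserved variables $q^{n+1}_i$ (which is what is meant by $q^n_i(\vec{\theta})$ here, since $q^n_i$ itself is independent of $\vec{\theta}$) are an \emph{affine} function of the two-dimensional parameter $\vec{\theta} = (\theta^n_{i-\hf}, \theta^n_{i+\hf})$, and (ii) the pressure $p(q)$ is a \emph{concave} function on the open half-space $\{ q = (\rho, \rho \vec{u}, \mathcal{E}) : \rho > 0 \}$. Given these two facts, the composition $\vec{\theta} \mapsto p(q^{n+1}_i(\vec{\theta}))$ is concave on any convex subset of parameter space on which the image has $\rho > 0$, which is exactly the statement to be proved.

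First I would verify the affine dependence: from the update formula $\rho^{n+1}_i(\vec{\theta}) = \rho^n_i - \lambda (\tilde{f}^{n,\rho}_{i+\hf} - \tilde{f}^{n,\rho}_{i-\hf})$ and the analogous expressions for $(\rho \vec{u})^{n+1}_i$ and $\mathcal{E}^{n+1}_i$, together with the definition \eqref{eqn:flux} $\tilde{F}^n_{i\pm\hf} = \theta^n_{i\pm\hf}(\hat{F}^n_{i\pm\hf} - \hat{f}^n_{i\pm\hf}) + \hat{f}^n_{i\pm\hf}$, one reads off that each component of $q^{n+1}_i$ is of the form $a + b \theta^n_{i-\hf} + c \theta^n_{i+\hf}$ for constants $a,b,c$ depending only on data at time $t^n$. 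Thus $q^{n+1}_i(\alpha \vec{\theta}^1 + (1-\alpha)\vec{\theta}^2) = \alpha\, q^{n+1}_i(\vec{\theta}^1) + (1-\alpha)\, q^{n+1}_i(\vec{\theta}^2)$.

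Next I would establish concavity of the pressure. Writing $p = (\gamma-1)\bigl( \mathcal{E} - \tfrac{1}{2} \|\rho\vec{u}\|^2 / \rho \bigr)$, the only nonlinear term is $\|\rho\vec{u}\|^2/\rho$, which is the perspective function of the convex map $\vec{m} \mapsto \|\vec{m}\|^2$ and is therefore convex on $\rho > 0$. Hence $p$ is concave there; this is a standard fact used in the positivity-preserving literature and I would simply cite it.

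Finally I would close the argument by noting that $S^\rho_i$ is a product of intervals, hence convex, so $\alpha \vec{\theta}^1 + (1-\alpha)\vec{\theta}^2 \in S^\rho_i$ for $\alpha \in [0,1]$. By construction of $S^\rho_i$ in Step~1, every $\vec{\theta} \in S^\rho_i$ guarantees $\rho^{n+1}_i(\vec{\theta}) \geq \epsilon^{n+1}_\rho > 0$, so the straight-line segment joining $q^{n+1}_i(\vec{\theta}^1)$ and $q^{n+1}_i(\vec{\theta}^2)$ lies entirely in the half-space where $p$ is concave. Combining the affine dependence from step one with the concavity of $p$ on this half-space yields the claimed inequality. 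There is no real obstacle here; the only thing that needs explicit verification is that each update preserves affine-ness of $q^{n+1}_i$ in $\vec{\theta}$, which is immediate from \eqref{eqn:flux}, and that $S^\rho_i$ is indeed the right convex set on which $\rho^{n+1}_i$ stays strictly positive, which is what Step~1 was designed to guarantee.
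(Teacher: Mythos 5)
Your proposal is correct and follows essentially the same route as the paper's proof: concavity of $p(q) = (\gamma-1)\left(\En - \|\rho\vec{u}\|^2/(2\rho)\right)$ in the conserved variables for $\rho>0$, combined with the affine dependence of the updated conserved variables on $\vec{\theta}$, with Step~1 guaranteeing the segment stays in the region of positive density. The extra detail you supply (the perspective-function justification of concavity and the explicit convexity of $S^\rho_i$) is a welcome elaboration of points the paper handles by citation and a one-line remark, but it is not a different argument.
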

\begin{proof}
Provided $\rho > 0$, the pressure function
\begin{equation*}
    p(q) := (\gamma - 1) \left( {\En} -\frac{ \| \rho {\bf u }\|^2}{2 \rho} \right)
\end{equation*}
is concave with respect to the conserved variables 
$q = (\rho, \rho {\bf u}, \E)$ \cite{TangXu99,ZhangShu10,XiQiXu14}.
By definition of the limiting parameter, each of the conserved variables are \emph{linear functions} of
$\overrightarrow{\theta}$, which means 
\begin{equation*}
    q^n_i       \left( \alpha \overrightarrow{\theta}^1 + (1-\alpha) \overrightarrow{\theta}^2 \right) =
    \alpha q^n_i\left( \overrightarrow{\theta}^1\right) + (1-\alpha) q^n_i\left(\overrightarrow{\theta}^2 \right).
\end{equation*}
Together, and as a result of the construction in Step 1, these imply
\begin{align*}
    p\left(q^n_i\left( \alpha \overrightarrow{\theta}^1 + (1-\alpha) \overrightarrow{\theta}^2 \right) \right) & =
    p\left(\alpha q^n_i\left(\overrightarrow{\theta}^1\right) + (1-\alpha) q^n_i\left(\overrightarrow{\theta}^2 \right)\right), \\
    & \geq      \alpha p\left( q^n_i\left( \overrightarrow{\theta}^1 \right) \right) + 
            (1- \alpha) p\left( q^n_i\left( \overrightarrow{\theta}^2 \right) \right).
\end{align*}
\qed
\end{proof}
We define
$p_i\left( \overrightarrow{\theta} \right) := p\left( q^n_i \left( \overrightarrow{\theta} \right) \right)$
for any $\overrightarrow{\theta} \in [0,1]^2$ in order to simplify the notation for the ensuing discussion.

If we use $\hat{p}^{n+1}$ to denote the low-order pressure solved by the flux $\hat{f}^n$,
we can similarly define a numerical lower bound for the pressure as $\epsilon_p^{n+1}: = \min(\min_i\left(\hat{p}_i\right),\epsilon_0)$.
Next, we consider the following subset
\begin{equation}
S^p_i := \left\{ (\theta^n_{i-\hf}, \theta^n_{i+\hf}) \in S^\rho_i :
p_i\left( \theta^n_{i-\hf},\theta^n_{i+\hf} \right) \ge \epsilon_p^{n+1} \right\}
\subseteq S^\rho_i,
\end{equation}
and we observe that $S^p_i$ is convex, thanks to the result of Lemma \ref{lem:concave-press}.
We do not attempt to find the entire boundary of $S^p_i$
because that would be computationally intractable.
Instead, we define a single rectangle $R^{\rho,p}_i$ inside of $S^p_i$ that
define bounds on the limiting parameters.

To do this, we 
consider finitely many points on the boundary of $S^p_i$.
To begin, consider the four vertices of $S_i^\rho$ denoted by
$
A^{k_1,k_2} := (k_1 \Lambda^\rho_{-\hf,i}, k_2 \Lambda^\rho_{+\hf,i}),
$
with $k_1$, $k_2$ being 0 or 1.
For each $(k_1, k_2)$, we define $B^{{k_1,k_2}}$ based on two cases:
\begin{itemize}
\item \underline{Case 1.} If $p_i(A^{k_1,k_2}) \geq \epsilon_p^{n+1}$, we put $B^{{k_1,k_2}} := A^{k_1,k_2}$.  The origin always 
falls into this case.
\item \underline{Case 2.} Otherwise, we solve the quadratic equation $p_i(rA^{k_1,k_2})=\epsilon_p^{n+1}$ for the unknown variable $r \in [0,1]$,
and define $B^{{k_1,k_2}} := r A^{k_1,k_2}$. 
\end{itemize}
After checking each vertex of $S^\rho_i$, we define
\begin{align}
R_i^{\rho,p} := \left[ 0, \Lambda_{-\hf,I_i} \right] \times \left[ 0, \Lambda_{+\hf,I_i} \right]
\subseteq S^p_i \subseteq S^\rho_i,
\end{align}
where
\begin{align}
\Lambda_{-\hf,I_i} := \min_{\substack{k_2 = 0, 1 }}\left( B^{1,k_2} \right), \quad
\Lambda_{+\hf,I_i} := \min_{\substack{k_1 = 0, 1 }}\left( B^{k_1,1} \right).
\end{align}

After performing this two step process at each grid cell $x_i$, the end result
of this construction is the following theorem.
\begin{theorem}
The numerical flux in Eqn. \eqref{eqn:flux} preserves positivity of the solution for any 
\begin{equation}
\label{eqn:theta-admissable-interval}
    \theta^n_{i-\hf} \in \left[0,\, \min\left( \Lambda_{+\hf,I_{i-1}},\Lambda_{-\hf, I_{i}}
\right) \right].
\end{equation}
\end{theorem}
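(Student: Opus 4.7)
The plan is to reduce the statement to the two set-based constructions already carried out: namely, that Step 1 guarantees $\rho_i^{n+1} \geq \epsilon_\rho^{n+1} > 0$ whenever $(\theta^n_{i-\hf},\theta^n_{i+\hf}) \in S^\rho_i$, and that Step 2 refines this to a subrectangle $R^{\rho,p}_i = [0,\Lambda_{-\hf,I_i}]\times[0,\Lambda_{+\hf,I_i}] \subseteq S^p_i$ on which, additionally, $p_i(\theta^n_{i-\hf},\theta^n_{i+\hf}) \geq \epsilon_p^{n+1} > 0$. Since each interface value $\theta^n_{i-\hf}$ appears as the \emph{right} limiter for cell $i-1$ and as the \emph{left} limiter for cell $i$, the central observation is that requiring $\theta^n_{i-\hf} \in [0,\min(\Lambda_{+\hf,I_{i-1}},\Lambda_{-\hf,I_i})]$ for every $i$ is exactly what places the pair $(\theta^n_{i-\hf},\theta^n_{i+\hf})$ inside $R^{\rho,p}_i$ at \emph{every} cell simultaneously.

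First I would fix an arbitrary cell $x_i$ and write out the pair of constraints acting on its updated state: the left-interface bound from cell $i$ reads $\theta^n_{i-\hf} \leq \Lambda_{-\hf,I_i}$, and the right-interface bound reads $\theta^n_{i+\hf} \leq \Lambda_{+\hf,I_i}$. The hypothesis of the theorem, applied at interfaces $x_{i-\hf}$ and $x_{i+\hf}$ respectively, yields precisely these two inequalities (the intersection with $\Lambda_{+\hf,I_{i-1}}$ and $\Lambda_{-\hf,I_{i+1}}$ is redundant for cell $i$'s membership in its own rectangle). Hence $(\theta^n_{i-\hf},\theta^n_{i+\hf}) \in R^{\rho,p}_i$.

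Next I would invoke the density half of the construction to conclude $\rho_i^{n+1} \geq \epsilon_\rho^{n+1}$, and then the pressure half, which relies on Lemma~\ref{lem:concave-press}: since $R^{\rho,p}_i$ is the largest axis-aligned rectangle whose four vertices have been verified (directly or by pulling back along a ray to the level set $p_i = \epsilon_p^{n+1}$) to satisfy the pressure bound, concavity of $p$ in the conserved variables together with linearity of $q_i^n$ in $\overrightarrow{\theta}$ (used in Lemma~\ref{lem:concave-press}) extend the bound to the convex hull, which is the whole rectangle. Thus $p_i^{n+1} \geq \epsilon_p^{n+1} > 0$.

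The only subtle point — and the step I would emphasize — is the bookkeeping at shared interfaces: one must verify that the single scalar $\theta^n_{i-\hf}$ can simultaneously satisfy the upper bound $\Lambda_{+\hf,I_{i-1}}$ demanded by cell $i-1$'s rectangle and the upper bound $\Lambda_{-\hf,I_i}$ demanded by cell $i$'s rectangle, which is exactly why the $\min$ appears in \eqref{eqn:theta-admissable-interval}. Since $\theta^n_{i-\hf} = 0$ always lies in the interval (both bounds are nonnegative by construction — Cases 1–4 in Step 1 and the $r \in [0,1]$ pullback in Step 2 always produce values in $[0,1]$), the admissible interval is nonempty, and by applying the argument at every $i$ one obtains positivity of $\rho^{n+1}$ and $p^{n+1}$ globally. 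No routine calculations are required beyond the case analysis already tabulated in Step 1.
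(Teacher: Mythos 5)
Your proposal is correct and follows essentially the same route as the paper, which states the theorem as the immediate consequence of the two-step construction: membership of each pair $(\theta^n_{i-\hf},\theta^n_{i+\hf})$ in the cell-local rectangle $R^{\rho,p}_i$ (guaranteed at every cell simultaneously by taking the $\min$ of the two bounds meeting at each shared interface) yields $\rho^{n+1}_i \geq \epsilon^{n+1}_\rho$ and $p^{n+1}_i \geq \epsilon^{n+1}_p$ via Lemma~\ref{lem:concave-press}. Your explicit bookkeeping of the shared-interface constraint, and the observation that $\theta^n_{i-\hf}=0$ always lies in the admissible interval, are exactly the points the paper leaves implicit.
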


Although we could in principle choose any value in the interval defined in
Eqn. \eqref{eqn:theta-admissable-interval} (e.g. $\theta^n_{i-\hf}=0$), in order
to retain high-order accuracy, we choose the largest possible value 
that we can prove retains positivity.  That is, we
define
\begin{align}
\label{eqn:theta}
    \theta^n_{i-\hf} := \min\left( \Lambda_{+\hf,I_{i-1}},\Lambda_{-\hf, I_{i}} \right)
\end{align}
at each cell interface.

This finishes the discussion for the 1D scheme.  We reiterate that
this entire process relies on flux modifications, which the Picard integral
formulation was designed to accept, and is pointed out in Rmk.  \ref{rmk:pif}.

\begin{rmk}
The positivity of the solution is guaranteed for the \underline{entire simulation}.
\end{rmk}


One consequence of being a single-stage, single-step method is that 
we do not have stages where the density or pressure can become negative,
whereas multistage Runge-Kutta methods typically introduce either additional computational cost
or artificial sound speeds in order to retain positivity.
For example, in \cite{ZhangShu10,ZhangShu11-survey,ZhangShu12} the limiter is applied after
each stage in the Runge-Kutta method.  This introduces additional computational
complexity 
(i.e.\ there are multiple applications of the limiter per time step)
as well as further constraints on the time step selection
because the limiter relies on positivity of the forward Euler method. The
modifications made in \cite{XiQiXu14,ChLiTaXu14} are part of an effort to decrease
the computational complexity by only applying the limiter once
per time step.  However, this happens at the expense of potentially introducing
negative pressure and density.   In order to compensate for this, in \cite{XiQiXu14,ChLiTaXu14} the authors
indicate they artificially define the sound speed as $c=\sqrt{\gamma
|p|/|\rho|}$ for each stage in the Runge-Kutta method.  This is necessary 
to implement the characteristic decomposition required for the
high-order WENO reconstruction, and although
it does not affect the refinement study in \cite{XiQiXu14,ChLiTaXu14}, 
this treatment may lead to a potential numerical instability for some extreme cases.
A similar issue can be found for the ideal MHD equations \cite{tang14}.



\section{The positivity-preserving method: the 2D case} \label{sec:md}

In this section, we apply the positivity-preserving limiter to the two-dimensional case.
Extensions to a general multi-D case follow from what is provided here.

Recall that our single-stage, single-step update is given by
Eqn. \eqref{eqn:pif-2d}.  Similar to the 1D case,
we use $\hat{f}^n_{i-\hf,j}$ and $\hat{g}^n_{i,j-\hf}$ to denote the low-order positivity-preserving fluxes,
and our numerical method uses modified fluxes through
\begin{subequations}
\label{eqn:2dfluxes}
\begin{align}
\tilde{F}^n_{i-\hf,j} & := \theta^n_{i-\hf,j}(\hat{F}^n_{i-\hf,j} - \hat{f}^n_{i-\hf,j}) + \hat{f}^n_{i-\hf,j}, \\
\tilde{G}^n_{i,j-\hf} & := \theta^n_{i,j-\hf}(\hat{G}^n_{i,j-\hf} - \hat{g}^n_{i,j-\hf}) + \hat{g}^n_{i,j-\hf}.
\end{align}
\end{subequations}
Identical to the single-dimensional case, the positivity-preserving limiting procedure consists of two steps.
If we still use $\hat{\rho}^{n+1}$ and $\hat{p}^{n+1}$ to denote the low-order
density and pressure solved by the flux $\hat{f}^n$ and $\hat{g}^n$, we can
similarly define the 2D numerical lower bounds for density and pressure as
$\epsilon_\rho^{n+1}: = \min\left( \min_{i,j}\left( \hat{\rho}^{n+1}_{i,j}\right), \epsilon_0 \right)$
and 
$\epsilon_p^{n+1}: = \min\left( \min_{i,j}\left( \hat{p}^{n+1}_{i,j}\right), \epsilon_0 \right)$.

\subsection{Step 1: Maintain positivity of the density}

Our fist step is to find local bounds 
$\Lambda^\rho_{L,I_{i,j}}$, $\Lambda^\rho_{R,I_{i,j}}$, $\Lambda^\rho_{U,I_{i,j}}$ and $\Lambda^\rho_{D,I_{i,j}}$,
such that for any $ (\theta^n_{i-\hf,j},\theta^n_{i+\hf,j},\theta^n_{i,j-\hf},\theta^n_{i,j+\hf}) \in S_{i,j}^\rho$,
we have
\begin{equation}
\label{inequ3}
	\rho^{n+1}_{i,j} = \rho^n_{i,j} 
	- \lambda_x \left( \tilde{f}^{n,\rho}_{i+\hf,j}-\tilde{f}^{n,\rho}_{i-\hf,j} \right) 
	- \lambda_y \left( \tilde{g}^{n,\rho}_{i,j+\hf}-\tilde{g}^{n,\rho}_{i,j-\hf} \right)\ge \epsilon^{n+1}_\rho,
\end{equation}
where
\begin{align}
 S_{i,j}^\rho :=
	\left[0, \Lambda^\rho_{L,I_{i,j}}\right] \times \left[0, \Lambda^\rho_{R,I_{i,j}}\right] \times
	\left[0, \Lambda^\rho_{D,I_{i,j}}\right] \times \left[0, \Lambda^\rho_{U,I_{i,j}}\right].
\end{align}
Again, we have used the notation $g^\rho$ to refer to the first component of the flux function, $g$.
We define the low-order update as
\begin{equation*}
	\hat{\rho}^{n+1}_{i,j} := {\rho}_{i,j}^n - \lambda_x \left(\hat{f}^{n,\rho}_{i+\hf,j}-\hat{f}^{n,\rho}_{i-\hf,j} \right) 
- \lambda_y \left(\hat{g}^{n,\rho}_{i,j+\hf}-\hat{g}^{n,\rho}_{i,j-\hf} \right),
\end{equation*}
and observe that it satisfies $\hat{\rho}^{n+1}_{i,j} \geq \epsilon^{n+1}_\rho > 0$ for all $i,j$, provided the density is positive at time $t^n$.
%
Similar to Eqn. \eqref{inequ2}, we rewrite Eqn. \eqref{inequ3} as
\begin{equation}
\label{inequ4}
 \theta^n_{i-\hf,j} \Delta f_{i-\hf,j} - \theta^n_{i+\hf,j} \Delta f_{i+\hf,j} +
 \theta^n_{i,j-\hf} \Delta g_{i,j-\hf} - \theta^n_{i,j+\hf} \Delta g_{i,j+\hf} \ge \epsilon_\rho^{n+1} - \hat{\rho}^{n+1}_{i,j},
\end{equation}
where we have defined the deviation between the high- and low-order fluxes as
\begin{align}
\begin{cases}
	\Delta f_{i-\hf,j}   := \lambda_x (\hat{F}^{n,\rho}_{i-\hf,j}    - \hat{f}^{n,\rho}_{i-\hf,j}), \\
	\Delta f_{i+\hf,j}  := \lambda_x (\hat{F}^{n,\rho}_{i+\hf,j}   - \hat{f}^{n,\rho}_{i+\hf,j}), \\ 
	\Delta g_{i,j-\hf}  := \lambda_y (\hat{G}^{n,\rho}_{{i,j-\hf}} - \hat{g}^{n,\rho}_{i,j-\hf}), \\
	\Delta g_{i,j+\hf} := \lambda_y (\hat{G}^{n,\rho}_{i,j-\hf}   - \hat{g}^{n,\rho}_{i,j-\hf}). 
\end{cases}
\end{align}

Similar to the 1D case, we solve $\eqref{inequ4}$ based on the signs of $\Delta f_{i\pm \hf,j}$ and $\Delta g_{i,j\pm\hf}$
at each node $(x_i, y_j)$.  The basic idea requires a total of two steps:
\begin{enumerate}
  \item Identify the negative values of each of the four numbers
\begin{equation}
		\left\{ \Delta f_{i-\hf,j},\, -\Delta f_{i+\hf,j},\, \Delta g_{i,j-\hf},\, -\Delta g_{i,j+\hf} \right\}.
\end{equation}
  \item Corresponding to the collective negative values, we define upper
  bounds on the limiting parameters by solving Eqn. \eqref{inequ4} for each value of $\theta$ after neglecting any positive 
  values found.
  For example, if $\Delta f_{i-\hf,j}, -\Delta f_{i+\hf,j} < 0$ 
  and $\Delta g_{i,j-\hf}, -\Delta g_{i,j+\hf} \ge 0$, then we define
 \begin{align}
 \begin{cases}
 \Lambda^\rho_{L,I_{i,j}} := \Lambda^\rho_{R,I_{i,j}} := \min \left( \frac{\epsilon_\rho^{n+1} - \hat{\rho}^{n+1}_{i,j}}
 { \Delta f_{i-\hf,j}-\Delta f_{i+\hf,j}} , 1\right), \\
    \Lambda^\rho_{D,I_{i,j}} := \Lambda^\rho_{U,I_{i,j}} := 1.
    \end{cases}
    \end{align}
    Likewise, if $-\Delta f_{i+\hf,j}, \Delta g_{i,j-\hf} < 0$ 
  and $\Delta f_{i-\hf,j}, -\Delta g_{i,j+\hf} \ge 0$, then we define
 \begin{align}
 \begin{cases}
 \Lambda^\rho_{R,I_{i,j}} := \Lambda^\rho_{D,I_{i,j}} := \min \left( \frac{\epsilon_\rho^{n+1} - \hat{\rho}^{n+1}_{i,j}}
 { -\Delta f_{i+\hf,j}+\Delta g_{i,j-\hf}} , 1\right), \\
    \Lambda^\rho_{L,I_{i,j}} := \Lambda^\rho_{U,I_{i,j}} := 1.
    \end{cases}
    \end{align}
    There are a total of 16 cases.  Each follow similarly, and are omitted for brevity.
  \end{enumerate}

\subsection{Step 2: Maintain positivity of the pressure}

Using the same construction from \S\ref{subsec:step2-1d}, we 
identify a rectangle $R_{i,j}^{\rho,p} \subseteq S_{i,j}^\rho$
where the pressure satisfies $p_{i,j}(\theta^n_{i-\hf,j},\theta^n_{i+\hf,j},\theta^n_{i,j-\hf},\theta^n_{i,j+\hf}) \geq \epsilon_p^{n+1} $.
%
Again, we consider the vertices of the region that were computed in the first step.  In 2D, we define
them as
\[
A^{k_1,k_2,k_3,k_4} := (k_1 \Lambda^\rho_{L,I_{ij}}, k_2 \Lambda^\rho_{R,I_{ij}},
k_3 \Lambda^\rho_{D,I_{ij}}, k_4 \Lambda^\rho_{U,I_{ij}}),
\quad
k_1, k_2, k_3, k_4 \in \{0,1 \}.
\]

We rescale each vertex in an identical manner to the 1D case presented in 
subsection \ref{subsec:step2-1d}.  There are two cases:
\begin{itemize}
\item \underline{Case 1.} If $p_{i,j}(A^{k_1,k_2,k_3,k_4}) \geq \epsilon_p^{n+1}$, we 
define the vertex $B^{{k_1,k_2,k_3,k_4}} := A^{k_1,k_2,k_3,k_4}$.
\item \underline{Case 2.} 
We solve the quadratic equation $p_{i,j}(rA^{k_1,k_2,k_3,k_4}) = \epsilon_p^{n+1}$ for the unknown
$r \in [0,1]$ and put $B^{{k_1,k_2,k_3,k_4}} := r A^{k_1,k_2,k_3,k_4}$.
%
\end{itemize}
%
%
In the final step, we identify a rectangular box inside $S_{i,j}^p$ through
\begin{align}
R_{i,j}^{\rho,p} := [0, \Lambda_{L,I_{i,j}}] \times [0,\Lambda_{R,I_{i,j}}] \times [0, \Lambda_{D,I_{i,j}}] \times [0,\Lambda_{U,I_{i,j}}],
\end{align}
where
\begin{equation}
\begin{aligned}
\Lambda_{L,I_{i,j}} := \min_{{k_{2,3,4} \in \{0, 1\}}}\left( B^{1,k_2,k_3,k_4} \right), \quad
\Lambda_{R,I_{i,j}} := \min_{{k_{1,3,4} \in \{0, 1\}}}\left( B^{k_1,1,k_3,k_4} \right), \\
\Lambda_{D,I_{i,j}} := \min_{{k_{1,2,4} \in \{0, 1\}}}\left( B^{k_1,k_2,1,k_4} \right), \quad
\Lambda_{U,I_{i,j}} := \min_{{k_{1,2,3} \in \{0, 1\}}}\left( B^{k_1,k_2,k_3,1} \right).
\end{aligned}
\end{equation}
After repeating this procedure for each node $(x_i, y_j)$, we finish by defining the
scaling parameter as
\begin{equation}
\theta^n_{i-\hf,j} := \min(\Lambda_{R,I_{i-1,j}},\Lambda_{L, I_{i,j}}), \quad
\theta^n_{i,j-\hf} := \min(\Lambda_{U,I_{i,j-1}},\Lambda_{D, I_{i,j}}),
\end{equation}
and insert the result into Eqn \eqref{eqn:2dfluxes} to define our modified fluxes.
This finishes the discussion for the 2D scheme, and a similar
positivity-preserving theorem follows as in Thm.
\ref{eqn:theta-admissable-interval}.

\section{Numerical results} \label{sec:numerical-results}

In this section, we perform numerical simulations with our proposed positivity-preserving method on 1D and 2D compressible Euler equations. 

\subsubsection{Implementation details}

The parameters we use for our WENO reconstructions include a power parameter $p=2$, and a regularization parameter 
$\eps = 10^{-6}$, and a  gas constant of $\gamma = 1.4$.  In addition, we follow common practice and use a global (as opposed to a local) value for $\alpha$ in the Lax-Friedrichs
flux splitting for all of our simulations.  This introduces additional
numerical dissipation that helps to prevent unphysical oscillations in this high-order scheme.  Contrary to what typically happens
with first-order finite volume schemes, the additional numerical dissipation introduced by this choice does not introduce an exorbitant amount of artificial diffusion.
In every simulation save one, the CFL number is $0.35$.
All of our numerical results can be found in the open source software package FINESS \cite{FINESS}.

\subsection{Accuracy test}
To test the accuracy of our method,
we use the smooth vortex problem with low density and low pressure \cite{XiQiXu14,ZhangShu12}.
Initially, we have a mean flow
\begin{align}
(\rho, u^1, u^2, u^3, p) = (1, 1, 1, 0, 1),
\end{align}
with perturbations on the velocities $u^1$, $u^2$ and the temperature $T = {p}/{\rho}$, given by
\begin{equation*}
(\delta u^1, \delta u^2) = \frac{\epsilon}{2\pi}e^{0.5(1-r^2)}(-{y},{x}), \quad
\delta T = - \frac{(\gamma-1)\epsilon^2}{8\gamma\pi^2}e^{1-r^2}, \quad
r^2 := {x}^2 + {y}^2.
\end{equation*}
%
The initial pressure and density are determined by keeping the entropy $S = p/{\rho^\gamma}$ constant.
The domain is $(x,y) \in [-5,5]\times[-5,5]$ with 
periodic boundary condition on all sides.
The vortex strength $\epsilon$ is set as $10.0828$ such that the lowest density and lowest 
pressure in the center of the vortex are $7.8 \times 10^{-15}$ and $1.7 \times 10^{-20}$ respectively.

A convergence study is presented in Table \ref{tab:2dhd}.  The $L_1$-errors
and $L_\infty$-errors of the density are computed at a final time of $t = 0.01$.
We observe the fifth-order accuracy of the proposed scheme, which is 
comparable with those demonstrated in \cite{XiQiXu14,ZhangShu12}.
In \cite{ZhangShu12}, the authors took $\Delta t = \Delta x^{\frac{5}{3}}$ in
order to make the spatial error dominate the numerical error. 
We find this treatment is not necessary to observe high-order spatial accuracy
because of the short the final time.  In our table, we only present the
results with a constant CFL number of $0.35$
that has been chosen for this, and all other simulations save one.
Without the addition of the positivity limiter,
we observe negative density and negative pressure with the Taylor formulation of the PIF-WENO scheme that
appears in the center of vortex.

\begin{table}
\begin{center}
\begin{Large}
    \caption{Accuracy test of the 2D smooth vortex. We show the $L_{1}$-errors and $L_{\infty}$-errors at time $t = 0.01$ of the density. 
    The solutions converge at fifth-order accuracy.   \label{tab:2dhd}}
    \end{Large}
\begin{tabular}{|c || c c c c |}
  \hline
{\normalsize{ Mesh}} & {\normalsize  $L_{1}$-Error}  &  {\normalsize  Order} & {\normalsize $L_{\infty}$-Error } & {\normalsize Order } \\ \hline \hline
{\normalsize $80 \times 80$}  	&2.970e-06	&	-	&	2.494e-04& 	-	\\
{\normalsize $160 \times 160$}  &1.627e-07	&	4.190	&	2.442e-05&	3.353	\\
{\normalsize $320 \times 320$} 	& 7.384e-09	&	4.462	&	1.390e-06&	4.135	\\
{\normalsize $640 \times 640$} &2.428e-10	&	 4.927	&	4.718e-08&	4.881	\\ \hline
 \end{tabular}
\end{center}
\end{table}

\subsection{1D Sedov blast wave problem}
The first 1D problem we considered is the 1D Sedov blast problem originally from the book by Sedov \cite{se59}.
The problem describes an intense explosion in a gas where the disturbed air is separated from the undisturbed air by a shock wave.
Initially, we deposit a quantity of energy $\E=3200000$ into the center cell of the domain with the length of $\Dx$,
and the energy in every other cell is set to $10^{-12}$.
The other quantities are initialized with a constant values of $\rho = 1$ and $u^1 = 0$.
The numerical results are displayed in Fig.~\ref{1dsed}, where we see the shock wave is captured with the proposed limiter used.
In Fig.~\ref{1dsed}, we use the exact solution given in Sedov's book \cite{se59} as the reference solution.
Our results are in agreement with other recent work \cite{ZhangShu10,ZhangShu12,XiQiXu14}.

\subsection{1D double rarefaction problem}
The second 1D problem we considered is the double rarefaction problem. It is a
Riemann problem with an initial condition of $\rho_L = \rho_R = 7$, $u^1_L =
-1$, $u^1_R = 1$ and $p_L = p_R = 0.2$.  The exact solution consists of two
rarefaction waves traveling in opposite directions, which results in the
creation of a vacuum in the center of the domain.  Only with the proposed
limiter are we are able to solve this low density and low pressure problem
with the high-order finite difference WENO method.  For this problem only, we find it is
necessary to reduce the CFL number from $0.35$ to $0.15$ in order to avoid
introducing spurious oscillations near the top of the rarefactions.  The
numerical results are presented in Fig.~\ref{1dst}, where we use the same
resolution of $\Dx = 1/200$ as those in reference
\cite{ZhangShu10,ZhangShu12,XiQiXu14}.  Our
results are in agreement with the exact solution.  Here, the exact
solution is a highly resolved solution with $\Dx = 1/1000$.  Other Riemann
problems have been investigated, and our method gives similar results as those
found elsewhere in the literature (e.g.  \cite{EsVi96,TangXu99}).

\subsection{2D Sedov blast wave problem}
We also consider a 2D version of the Sedov blast wave problem.
In 2D case, the problem has an exact self-similar solution and we expect the numerical result has a similar structure.
In the simulation, we only compute one quadrant of the whole domain, where we
choose the computation domain to be $(x,y) \in [0, 1.1] \times [0, 1.1]$.
Similar to the 1D case, we deposit a quantity of energy $\E=0.244816$ into the lower left corner cell, and
set the energy in every other cell to be $\E=10^{-12}$.
The other initial values are identical to the 1D case.
We apply solid wall boundary conditions along the bottom ($x=0$) and left
($y=0$) boundaries so that 
the setup is equivalent to computing the whole domain $[-1.1, 1.1] \times [-1.1, 1.1]$ with $\E=0.979264$.
The density is presented in Fig.~\ref{2dsed},
from which we can see the result has a nice self-similar structure.
Additionally, we observe that the density cut at $y=0$ agrees with the exact solution.

\subsection{2D Shock diffraction problem}
The second 2D problem we consider is the shock diffraction problem. 
The computational domain is $  [0, 1]\times [6, 11]   \bigcup [1, 13]\times [0, 11]$.
There is a shock wave of Mach number $5.09$ initially located at $\{x = 0.5, 6 \leq y \leq 11\}$.
As time evolves, the wave moves into undisturbed air with a density of $\rho = 1.4$ and pressure of $p = 1$. 
We use an inflow boundary condition at $\{x = 0, 6 \leq y \leq 11\}$, and an outflow boundary condition at $\{x = 13, 0 \leq y \leq11 \} $, $ \{ 1 \leq x \leq 13, y = 0 \}$ and $ \{0 \leq x \leq 13, y = 11 \}$.
For the other parts of the boundary where $\{ 0 \leq x \leq 1, y = 6 \}$ and $ \{ x = 1,  0 \leq y \leq 6 \} $,
solid wall boundary conditions are applied. 
As the shock passes the corner, negative density and negative pressure is observed without the addition of the
positivity limiter to the Taylor PIF-WENO scheme.
This issue is resolved with the proposed modifications to the older scheme. 
In Fig.~\ref{2dsd}, we present results for the density and pressure at time $t = 2.3$.

\begin{figure}
\begin{center}
\begin{tabular}{cc}
(a)\includegraphics[width=0.42\textwidth]{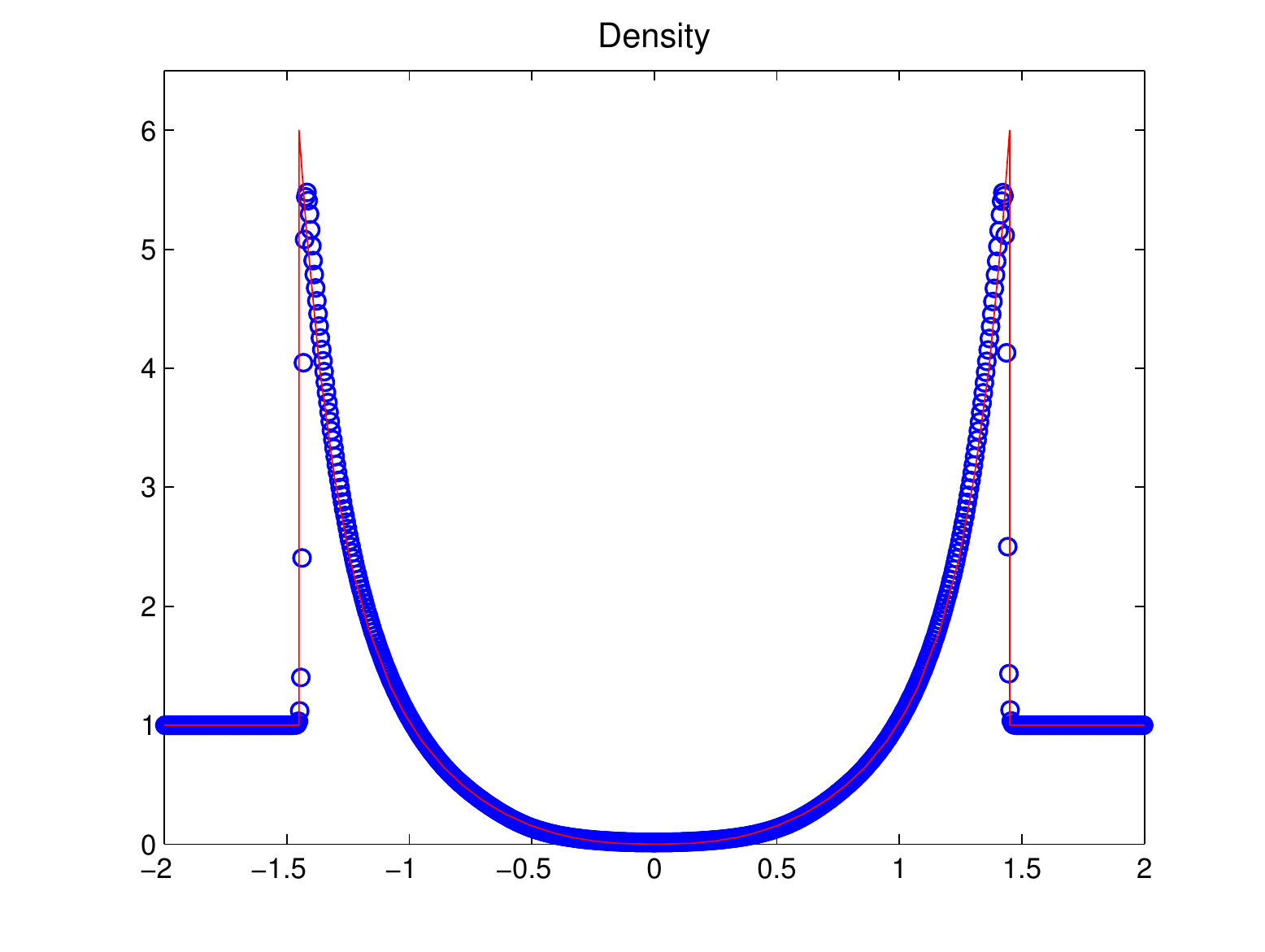} &
(b)\includegraphics[width=0.42\textwidth]{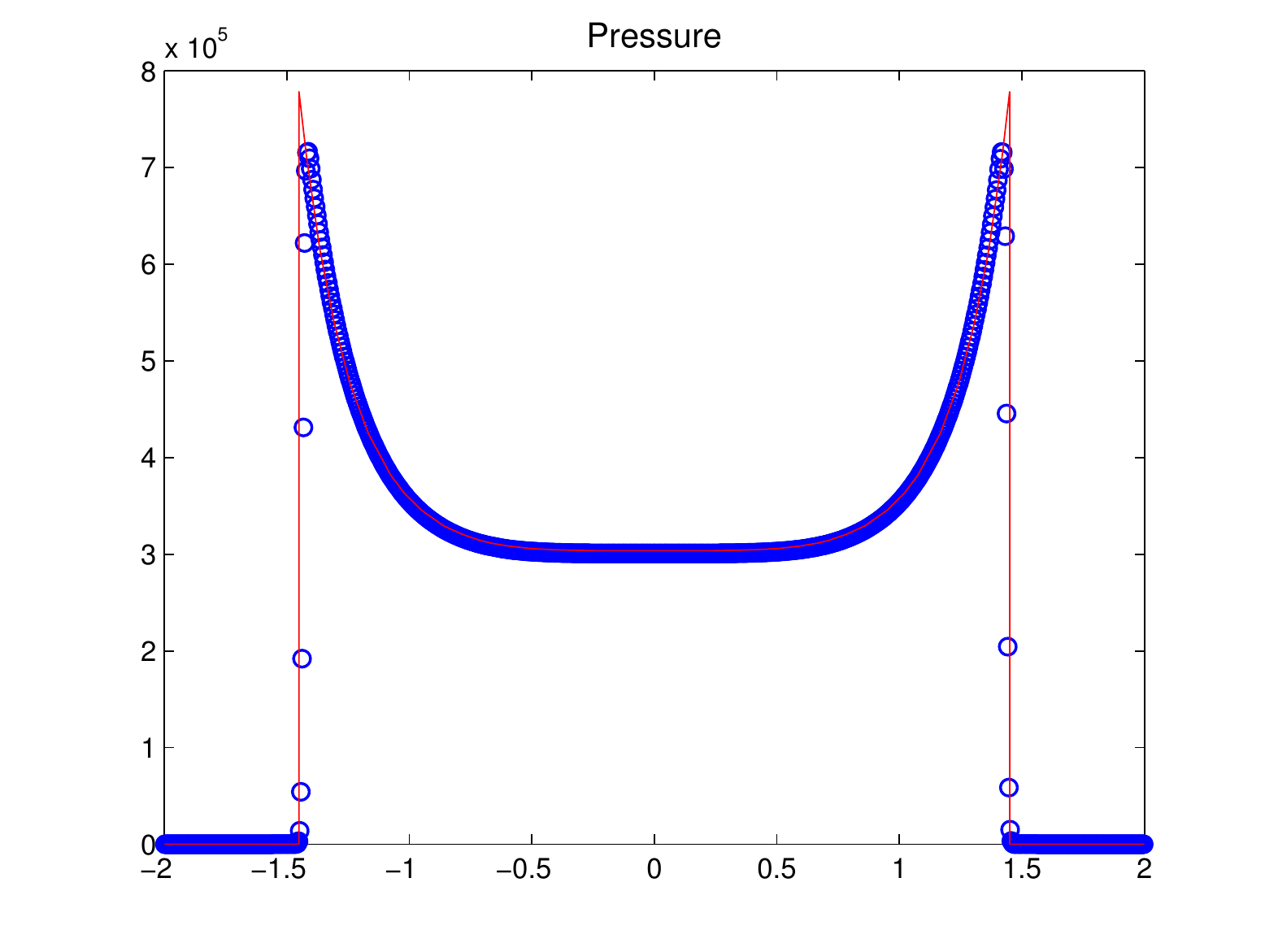} \\
\end{tabular}
(c)\includegraphics[width=0.42\textwidth]{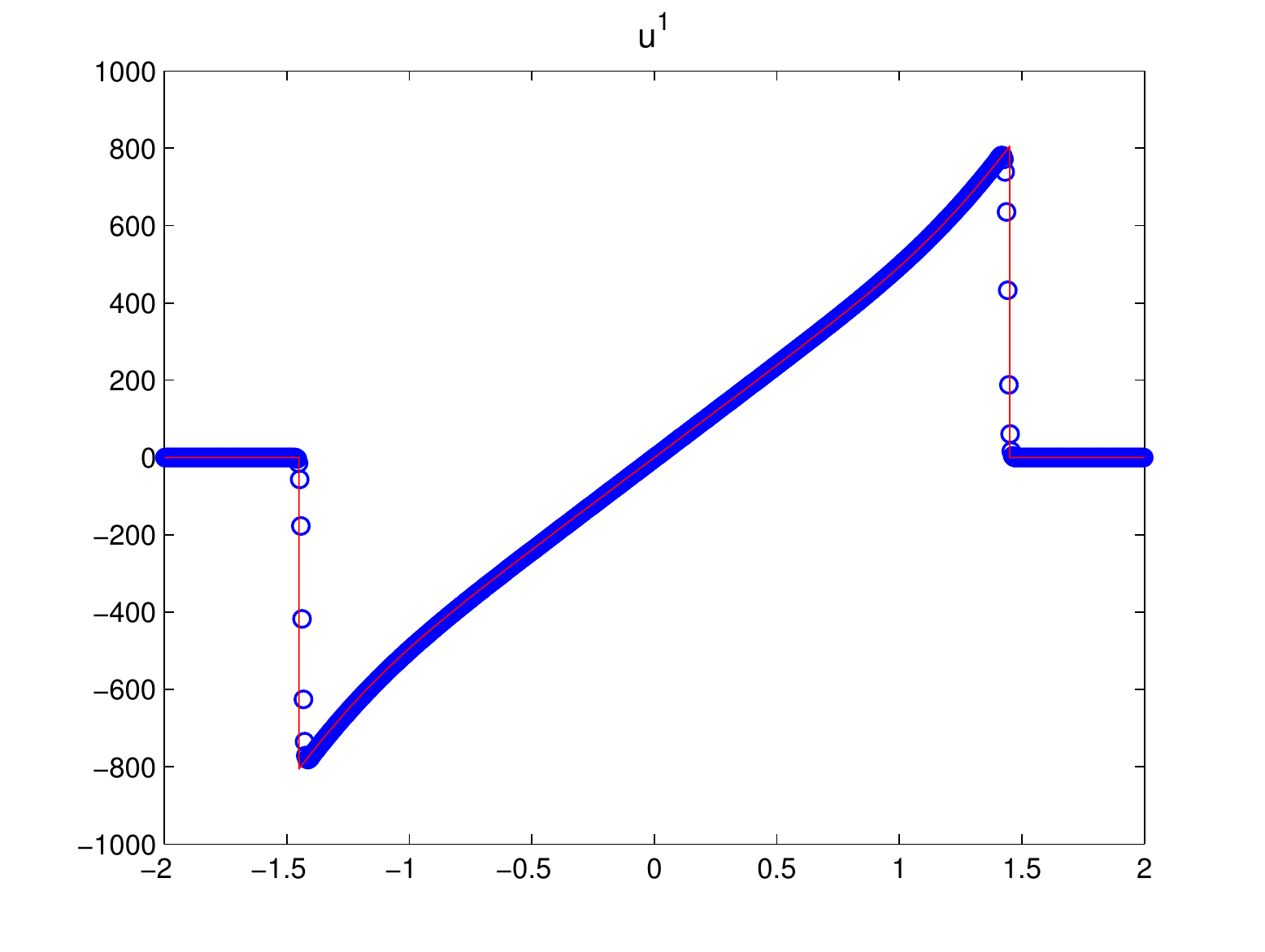}
      \caption{1D Sedov blast wave problem.
 These panels show plots at time $t = 0.001$ of (a) the density, (b) the
 pressure and  (c) the velocity $u^1$. The solid lines are the exact solutions. The
 solution was obtained on a mesh with $\Dx = 1/200$ and a CFL of 0.35.  
\label{1dsed}}
\end{center}
\end{figure}

\begin{figure}
\begin{center}
\begin{tabular}{cc}
(a)\includegraphics[width=0.42\textwidth]{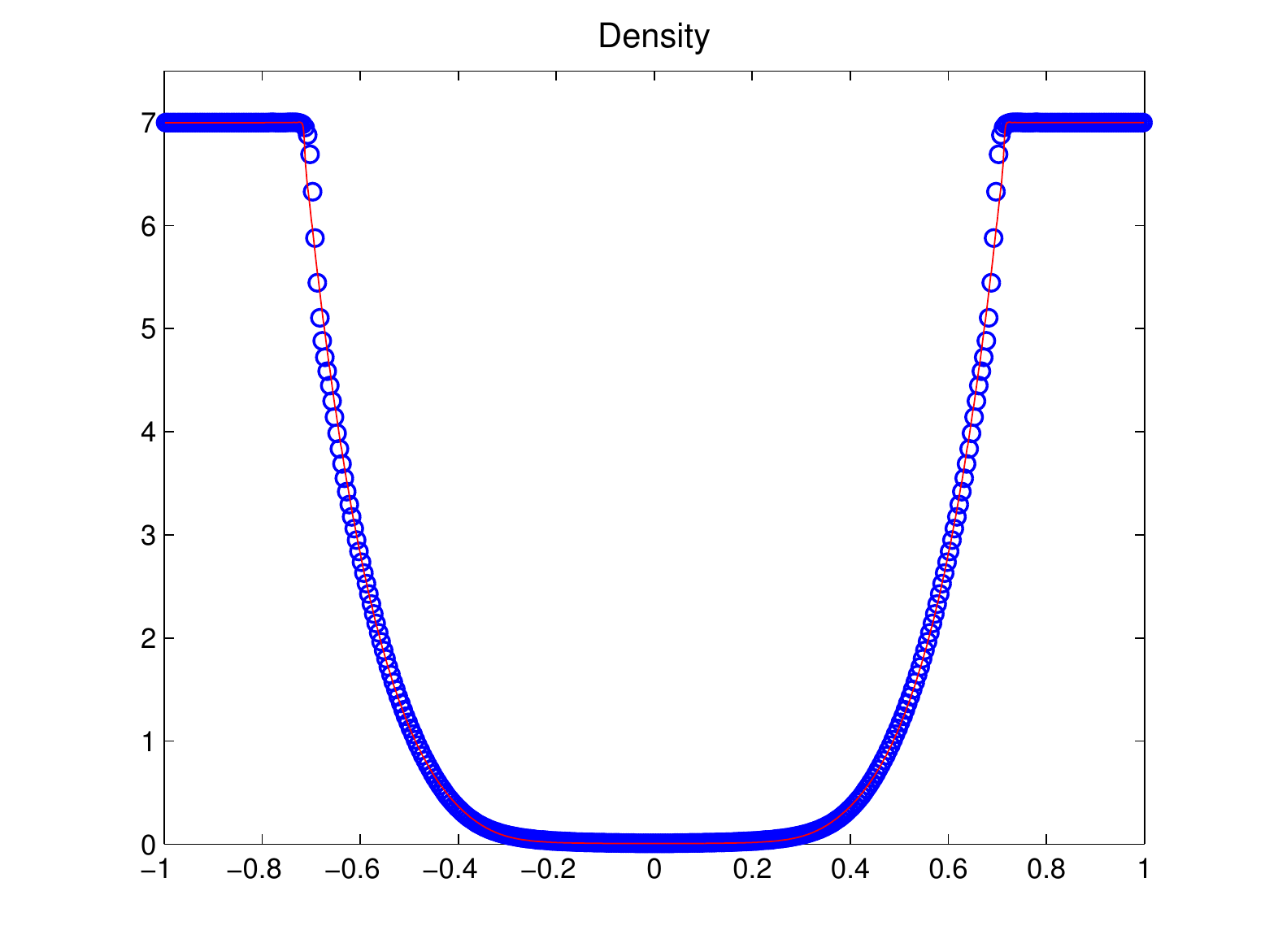} &
(b)\includegraphics[width=0.42\textwidth]{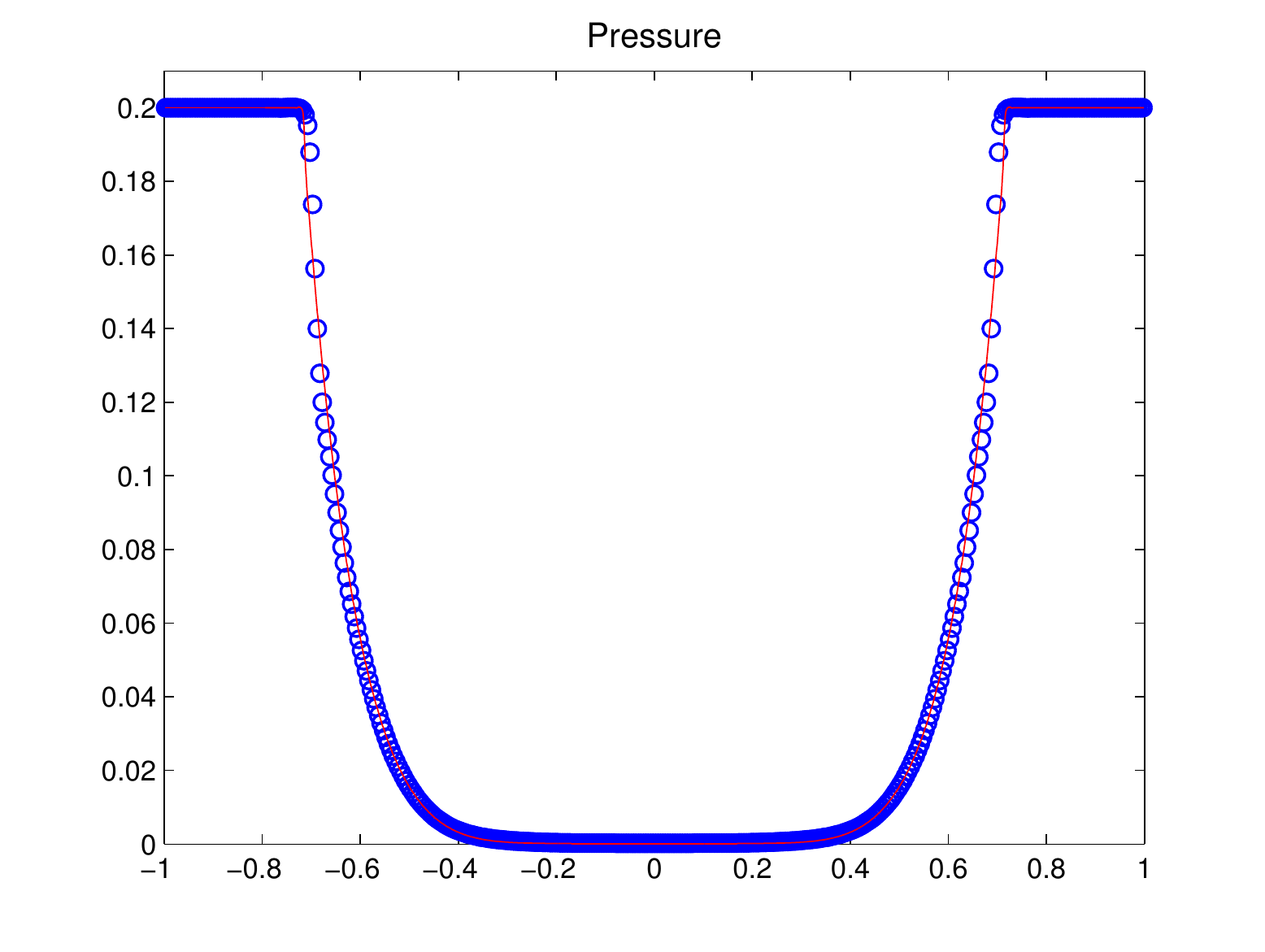} \\
\end{tabular}
(c)\includegraphics[width=0.42\textwidth]{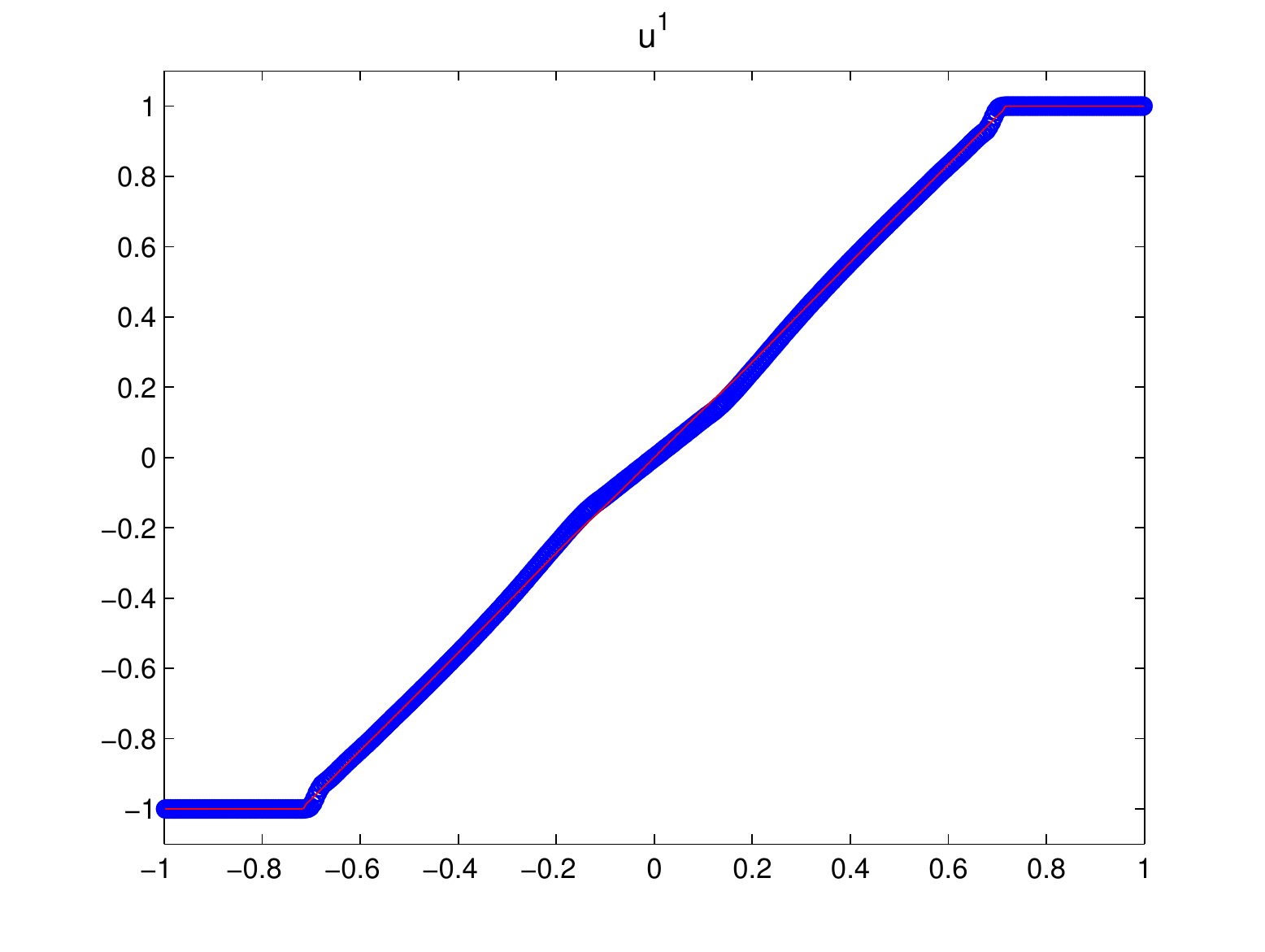}
      \caption{1D double rarefaction problem.
These panels show plots at time $t = 0.6$ of (a) the density, (b) the
pressure and  (c) $u^1$. The solid lines are the exact solutions. The solution
is obtained on a mesh with $\Dx = 1/200$ and a smaller CFL of $0.15$ that
help to reduce unphysical oscillations. \label{1dst}}
\end{center}
\end{figure}

\begin{figure}
\begin{center}
\begin{tabular}{cc}
(a)\includegraphics[width=0.42\textwidth]{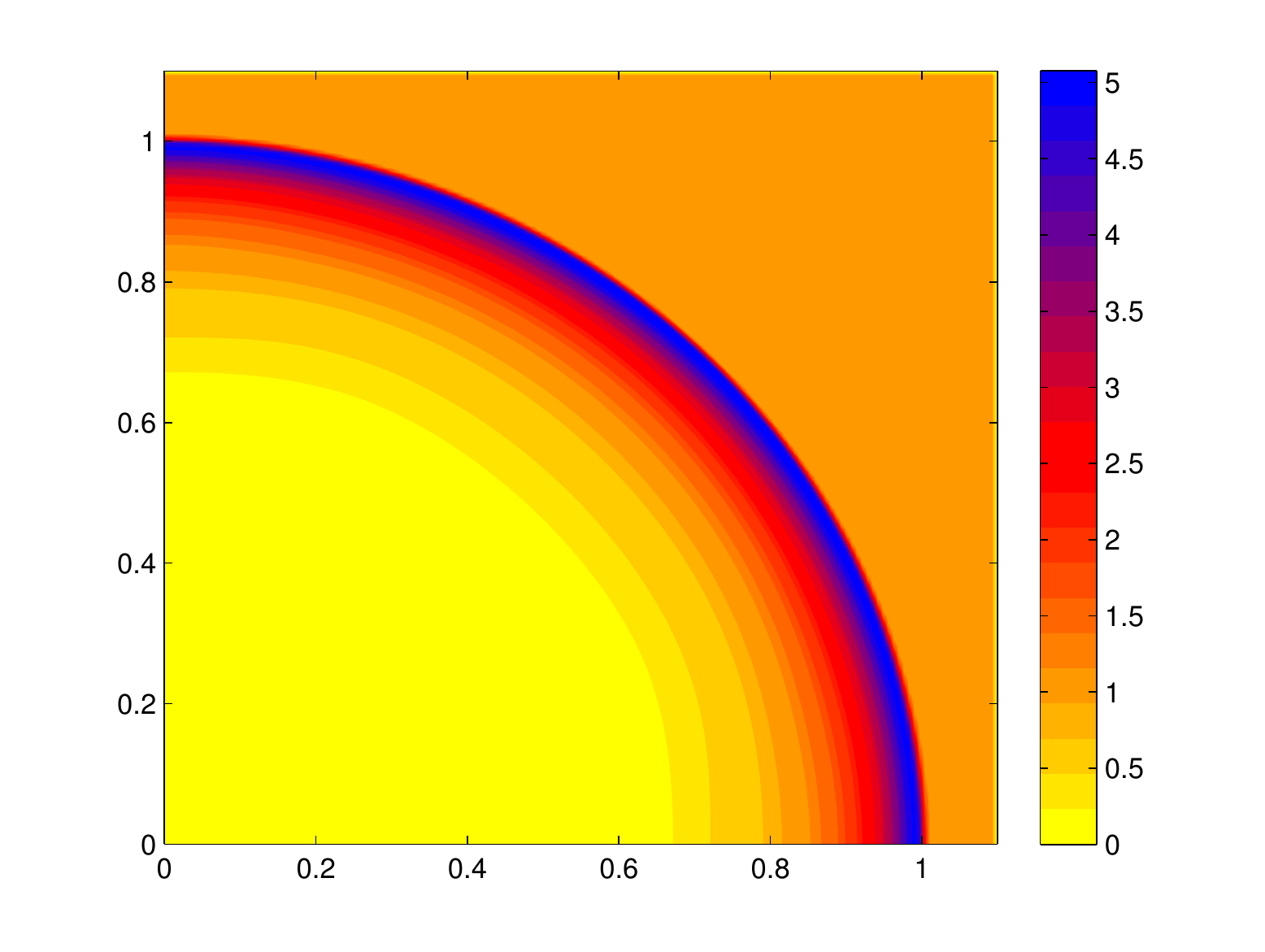} &
(b)\includegraphics[width=0.42\textwidth]{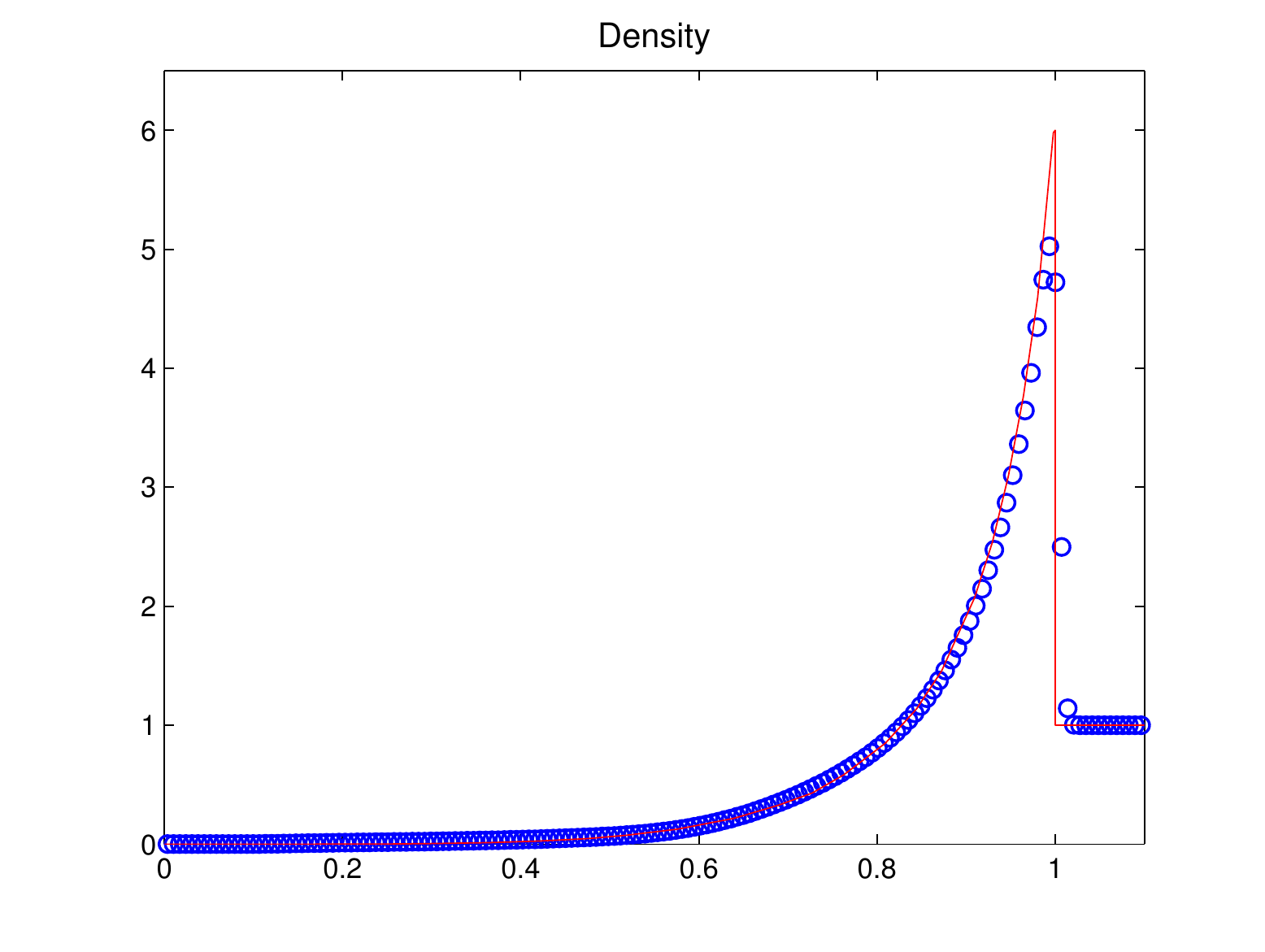} \\
\end{tabular}
      \caption{2D Sedov blast wave problem.
These panels show plots at time $t = 1$ of (a) the density, and (b) a
slice of the density along $y = 0$. The solid line in (b) is the exact solution. The
solution is obtained on a $160 \times 160$ mesh and a CFL number of $0.35$.  \label{2dsed}}
\end{center}
\end{figure}

\begin{figure}
\begin{center}
\begin{tabular}{cc}
(a)\includegraphics[width=0.42\textwidth]{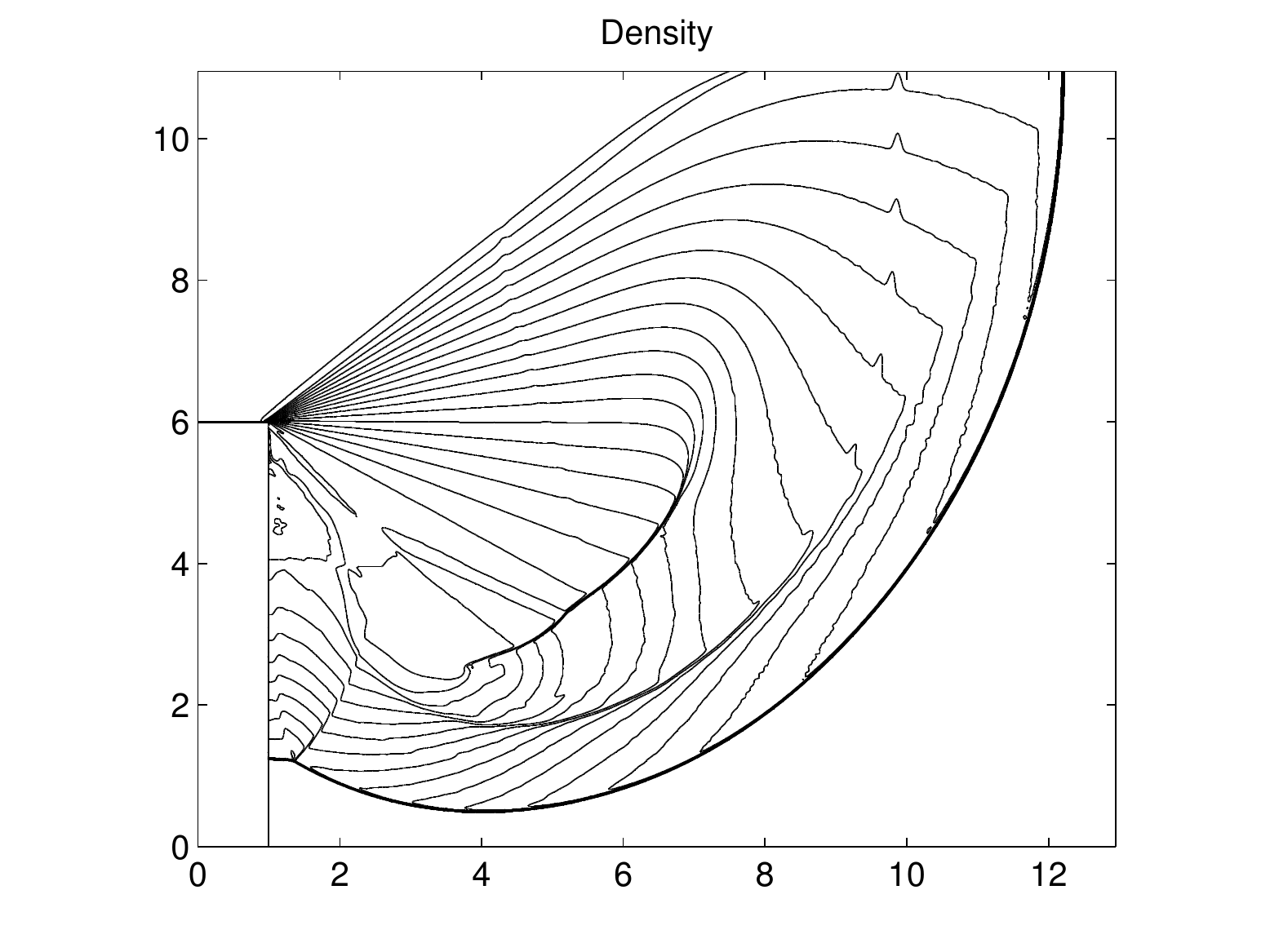} &
(b)\includegraphics[width=0.42\textwidth]{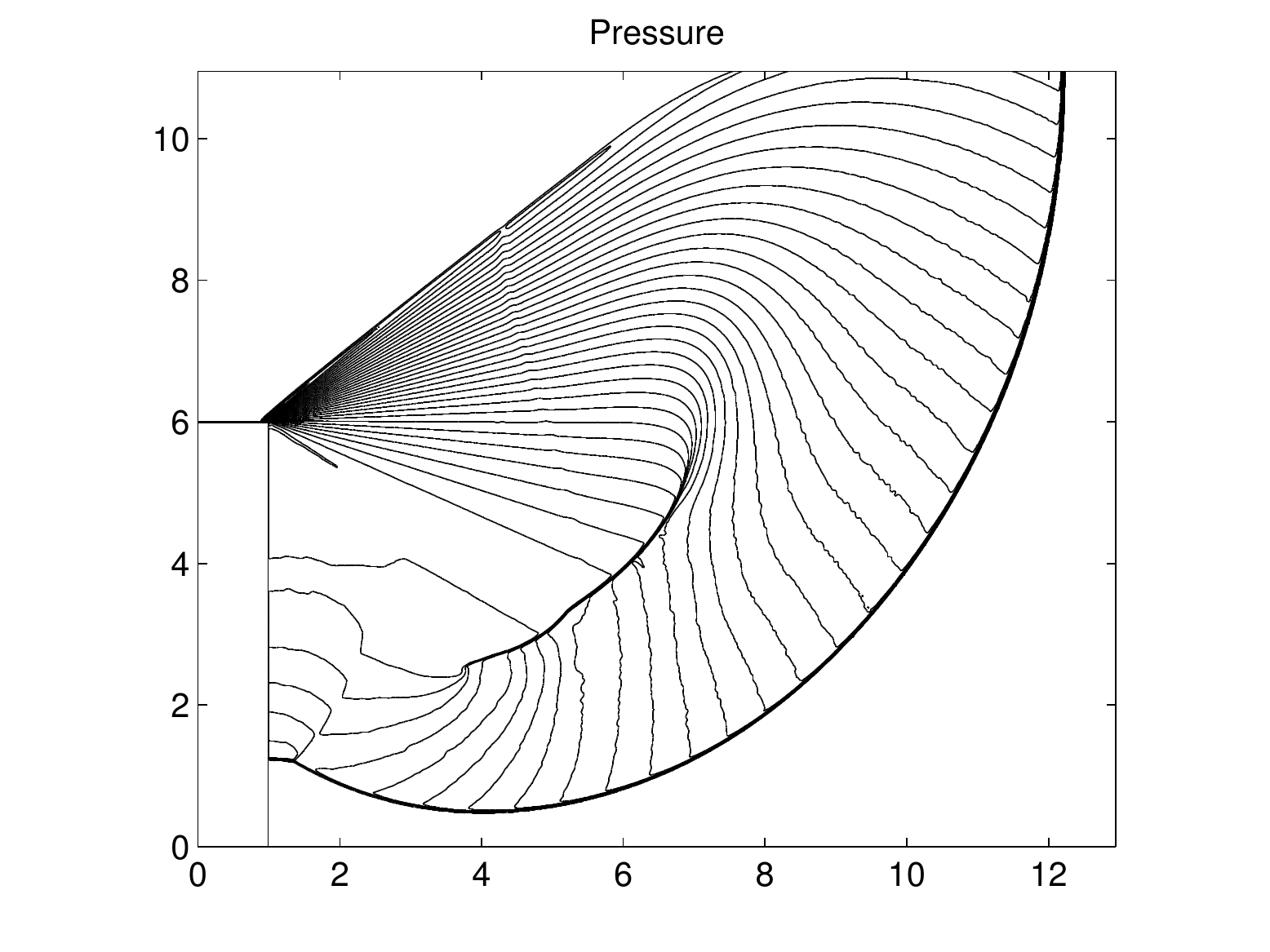} \\
\end{tabular}
      \caption{2D Shock diffraction problem. These panels show plots
      at time $t = 2.3$ of (a) the density, and (b) the pressure. A total of
      20 equally spaced contour lines from $\rho = 0.0662$ to $7.07$ are
      plotted in (a). We use 40 equally spaced contour lines from $p = 0.091$
      to $37$ are in (b). The solution is computed on a mesh with $\Dx =\Dy =
      1/30$ and a CFL number of $0.35$. \label{2dsd}
}
\end{center}
\end{figure}
\section{Conclusions and future work} \label{sec:conclusions}

In this paper we propose a high-order, single-stage, single-step, positivity-preserving 
method for the compressible Euler equations. 
The base scheme is the Taylor discretization of the Picard integral formulation of the PDE, where
a single finite difference WENO reconstruction is applied once per time step.
A positivity-preserving limiter is introduced in such a way that the positivity of the solution 
is preserved for the \emph{entire simulation}, which adds a degree of robustness to our scheme.
In addition, we have no excessive CFL restriction in order to retain
positivity, which makes our new method more efficient compared to recent positivity-preserving methods.
We demonstrate the effectiveness and efficiency of the positivity-preserving 
schemes on one- and two-dimensional problems with low density and pressure.
High-order accuracy is retained after the introduction of our positivity
preserving limiter on a test problem that has near zero pressure and density.
Future work includes the construction of positivity-preserving multiderivative
methods \cite{Seal2014}, applying the proposed method to other systems such as 
magnetohydrodynamics, as well as incorporating our method into an AMR framework. 

\noindent
\begin{acknowledgements}
The authors would like to thank the anonymous reviewer for the helpful suggestions to further improve this work.
This work has been supported in part by 
Air Force Office of Scientific Research 
grants 
FA9550-11-1-0281,  
FA9550-12-1-0343   
and
FA9550-12-1-0455,  
and by National Science Foundation grant number 
DMS-1115709       
and DMS-1316662.  
\end{acknowledgements}

\bibliographystyle{unsrt}

\bibliography{References}

\end{document}